\newtheorem{theorem}{Theorem}%[section]
\newtheorem{corollary}[theorem]{Corollary}
\newtheorem{conjecture}{Conjecture}
\newtheorem{lemma}{Lemma}
\theoremstyle{remark}
\numberwithin{equation}{section}
\author{Victor J.\ W.\ Guo}
\address{School of Mathematics and Statistics, Huaiyin Normal University,
Huai'an 223300, Jiangsu, People's Republic of China}
\email{jwguo@hytc.edu.cn}
\thanks{The first author was partially supported by the National Natural
Science Foundation of China (grant 11771175).}
\author{Michael J.\ Schlosser}
\address{Fakult\"at f\"ur Mathematik, Universit\"at Wien,
Oskar-Morgenstern-Platz~1, A-1090 Vienna, Austria}
\email{michael.schlosser@univie.ac.at}
\thanks{The second author was partially supported by FWF Austrian Science
Fund grant P 32305.}
\title[Some $q$-congruences for truncated basic series: even powers]{Some
new $q$-congruences for truncated\\ basic hypergeometric series: even powers}
\subjclass[2010]{Primary 33D15; Secondary 11A07, 11B65}
\keywords{basic hypergeometric series; supercongruences; $q$-congruences;
cyclotomic polynomial; Andrews' transformation.}
\begin{document}

\begin{abstract}
We provide several new $q$-congruences for truncated basic hypergeometric
series with the base being an even power of $q$.  Our results mainly concern
congruences modulo the square or the cube of a cyclotomic polynomial and
complement corresponding ones of an earlier paper containing
$q$-congruences for truncated basic hypergeometric
series with the base being an odd power of $q$.
We also give a number of related conjectures including $q$-congruences modulo
the fifth power of a cyclotomic polynomial and a
congruence for a truncated ordinary hypergeometric series
modulo the seventh power of a prime greater than $3$.
\end{abstract}

\maketitle

\section{Introduction}
In his first letter to Hardy from 1913, Ramanujan announced that
(cf.\ \cite[p.~25, Equation~(2)]{BR})
\begin{align}
\sum_{k=0}^\infty(8k+1)\frac{(\frac{1}{4})_k^4}{k!^4}
=\frac{2\sqrt{2}}{\sqrt{\pi}\,\Gamma(\frac 34)^2}, \label{eq:4ram}
\end{align}
along with similar hypergeometric identities.
Here $(a)_n=a(a+1)\cdots(a+n-1)$ denotes the Pochhammer symbol.
He did not provide proofs. This identity was eventually proved by Hardy
in \cite[p.~495]{Ha}.

In 1997, Van Hamme \cite{Hamme} proposed 13 interesting $p$-adic
analogues of\break
Ramanujan-type formulas for $1/\pi$ \cite{Ramanujan}, such as
\begin{align}
\sum_{k=0}^{(p-1)/4}(8k+1)\frac{(\frac{1}{4})_k^4}{k!^4}
\equiv p\frac{\Gamma_p(\frac 12)\Gamma_p(\frac 14)}{\Gamma_p(\frac 34)}
\pmod{p^3},\quad\text{if $p\equiv 1\pmod{4}$},
\label{eq:pram}
\end{align}
%\begin{align}
%\sum_{k=0}^{(p-1)/2} (-1)^k (4k+1)\frac{(\frac{1}{2})_k^3}{k!^3}
%\equiv p(-1)^{\frac{p-1}{2}}\pmod{p^3}, \label{eq:pram}
%\end{align}
where $p$ is an odd prime and $\Gamma_p$ is the $p$-adic gamma
function~\cite{Mor}.
Van Hamme \cite{Hamme} himself proved three of them.
Nowadays all of the 13 supercongruences have been confirmed
by different techniques (see \cite{Lo,Mo,McCO,OZ,Swisher}).
For some informative background on Ramanujan-type supercongruences,
we refer the reader to Zudilin's paper \cite{Zud2009}.
During the past few years, congruences and supercongruences
have been generalized to the $q$-world by many authors (see, for example,
\cite{Gorodetsky,Guo2018,Guo2,Guo-t,Guo-gz,Guo-par,
GJZ,GL18,GPZ,GS19,GS0,GS,GuoZu,NP,Straub,Tauraso2}).
As explained in \cite{GuoZu}, $q$-supercongruences are closely related to
studying the asymptotic behaviour of $q$-series at roots of unity.
%which intimates an intrinsic connection to mock theta functions
%and quantum modular forms (see e.g. \cite{FOR,Zag}).

Recently, the authors \cite[Theorems 1 and 2]{GS19} proved that
for odd $d\geqslant 5$,
\begin{equation}
\sum_{k=0}^{n-1}[2dk+1]\frac{(q;q^d)_k^d}{(q^d;q^d)_k^d}q^{\frac{d(d-3)k}{2}}
\equiv
\begin{cases} 0\pmod{\Phi_n(q)^2}, &\text{if $n\equiv -1\pmod{d}$,}\\[5pt]
0\pmod{\Phi_n(q)^3}, &\text{if $n\equiv -\frac{1}{2}\pmod{d}$,}
\end{cases} \label{odd-1}
\end{equation}
and for odd $d\geqslant 3$ and $n>1$,
\begin{equation}
\sum_{k=0}^{n-1}[2dk-1]\frac{(q^{-1};q^d)_k^d}{(q^d;q^d)_k^d}
q^{\frac{d(d-1)k}{2}}
\equiv
\begin{cases} 0\pmod{\Phi_n(q)^2}, &\text{if $n\equiv 1\pmod{d}$,}\\[5pt]
0\pmod{\Phi_n(q)^3}, &\text{if $n\equiv \frac{1}{2}\pmod{d}$.}
\end{cases}  \label{odd-2}
\end{equation}
Here and throughout the paper, we adopt the standard $q$-notation:
For an indeterminate $q$, let
\begin{equation*}
(a;q)_n=(1-a)(1-aq)\cdots (1-aq^{n-1})
\end{equation*}
be the {\em $q$-shifted factorial}.
For convenience, we compactly write
\begin{equation*}
(a_1,a_2,\ldots,a_m;q)_n=(a_1;q)_n (a_2;q)_n\cdots (a_m;q)_n
\end{equation*}
for a product of $q$-shifted factorials.
Moreover,
\begin{equation*}
[n]=[n]_q=1+q+\cdots+q^{n-1}
\end{equation*}
denotes the {\em $q$-integer}, which can be defined by $[n]=(q^n-1)/(q-1)$
to hold for any integer $n$, including negative $n$, which in particular
gives $[-1]=-1/q$ (which is needed in the $k=0$ terms of \eqref{eq:t1-2}
and \eqref{eq:new-2} and at other places in this paper).
Furthermore, $\Phi_n(q)$ denotes the $n$-th {\em cyclotomic polynomial}
in $q$, which may be defined as
\begin{align*}
\Phi_n(q)=\prod_{\substack{1\leqslant k\leqslant n\\ \gcd(n,k)=1}}(q-\zeta^k),
\end{align*}
where $\zeta$ is an $n$-th primitive root of unity.

In this paper, we shall prove results similar to \eqref{odd-1} and
\eqref{odd-2} for \textit{even} $d$. The first result concerns the case $d=2$.

\begin{theorem}\label{thm:1}
Let $n$ be an odd integer greater than $1$. Then
\begin{align}
\sum_{k=0}^{n-1}[4k+1]\frac{(q;q^2)_k^2}{(q^2;q^2)_k^2} q^{-k}
\equiv q[n]^2\pmod{[n]^2\Phi_n(q)}, \label{eq:t1-1}\\[5pt]
\sum_{k=0}^{n-1}[4k-1]\frac{(q^{-1};q^2)_k^2}{(q^2;q^2)_k^2} q^{k}
\equiv -[n]^2\pmod{[n]^2\Phi_n(q)}.  \label{eq:t1-2}
\end{align}
\end{theorem}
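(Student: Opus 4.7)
The plan is to adopt Guo and Zudilin's \emph{creative microscoping} methodology (cf.\ \cite{GuoZu}), together with the factorization
\begin{equation*}
[n]^2\Phi_n(q) = \Phi_n(q)^3 \prod_{\substack{d\mid n\\ 1<d<n}} \Phi_d(q)^2,
\end{equation*}
whose factors are pairwise coprime in $\mathbb{Z}[q]$ since $n$ is odd. By a polynomial Chinese Remainder Theorem, each congruence in Theorem~\ref{thm:1} reduces to (a) establishing the sharp congruence modulo $\Phi_n(q)^3$, and (b) showing the vanishing of the left-hand side modulo $\Phi_d(q)^2$ for every divisor $d>1$ of $n$ with $d<n$; the right-hand side already vanishes modulo each such $\Phi_d(q)^2$ since $\Phi_d(q)\mid[n]$.

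For \eqref{eq:t1-1} modulo $\Phi_n(q)^3$, I would upgrade the sum by introducing a free parameter $a$, replacing $(q;q^2)_k^2$ with $(aq,q/a;q^2)_k$ and $(q^2;q^2)_k^2$ with $(aq^2,q^2/a;q^2)_k$. The resulting very-well-poised series admits a closed form at the specializations $a=q^n$ and $a=q^{-n}$ (where it already truncates at $k=(n-1)/2$, since $(q^{1-n};q^2)_k$ vanishes there) via a classical terminating summation---most naturally Jackson's terminating very-well-poised ${}_8\phi_7$ sum, or the simpler Rogers' ${}_6\phi_5$ identity, in the spirit of \cite{GS19}. The closed form is expected to carry an explicit factor $(1-aq^n)(a-q^n)$, so that setting $a=1$ immediately yields the congruence modulo $\Phi_n(q)^2$. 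Sharpening to modulo $\Phi_n(q)^3$ and matching the right-hand side $q[n]^2$ requires a Taylor expansion of the product form in $a-1$, retaining the $(a-1)$-term: this is where the constant $q$ should emerge. Statement \eqref{eq:t1-2} is then proved by a structurally parallel microscoping, with the shift $(q;q^2)_k \leadsto (q^{-1};q^2)_k$ producing the sign flip and the $k=0$ value $[-1]=-q^{-1}$ adjusting the leading constant to $-1$.

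For the divisor congruences, fix $d\mid n$ with $1<d<n$ and work at $q=\zeta$ with $\zeta$ a primitive $d$-th root of unity. Writing $k = sd + j$ with $0\leqslant j<d$ and $0\leqslant s<n/d$, one sees that whenever $j\geqslant (d+1)/2$ the factor $(q;q^2)_k^2$ contains the squared zero $(1-q^d)^2$, forcing the corresponding contribution to lie in $\Phi_d(q)^2\,\mathbb{Z}[q]$. For the remaining range $j\leqslant (d-1)/2$, the ratio $(q;q^2)_{sd+j}/(q^2;q^2)_{sd+j}$ factors multiplicatively at $q=\zeta$ into the $s=0$ contribution and a nonvanishing scalar depending on $s$, reducing the statement to the $\Phi_d(q)^2$-vanishing of the analogous sum truncated at $k=d-1$, which is delivered by the $d$-case of the preceding paragraph's argument.

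The principal obstacle will be the Taylor-expansion step that upgrades the modulus from $\Phi_n(q)^2$ to $\Phi_n(q)^3$: one has to extract the $(a-1)$-coefficient of the closed product form exactly, applying the $q$-Leibniz rule to a ratio of several $q$-shifted factorials, and verify that it coincides with $q$ (respectively $-1$) modulo $\Phi_n(q)$. The microscoping trick handles the $\Phi_n(q)^2$ vanishing essentially for free; it is this next-order bookkeeping, with the auxiliary cyclotomic factors of $[n]$ kept under control, that will require the most care.
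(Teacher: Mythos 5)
Your plan hinges on two steps that do not hold up as stated. First, the expectation that the parametrized sum
\begin{equation*}
S(a)=\sum_{k=0}^{n-1}[4k+1]\frac{(aq,q/a;q^2)_k}{(aq^2,q^2/a;q^2)_k}\,q^{-k}
\end{equation*}
vanishes at $a=q^{\pm n}$ is false: at $a=q^{-n}$ the series terminates at $k=(n-1)/2$ and the ${}_6\phi_5$ summation gives the \emph{nonzero} value $S(q^{\pm n})=q^{1-n}[n]^2$ (check $n=3$: one gets $(1+q+q^2)^2/q^2$). So the closed form does not carry the factor $(1-aq^n)(a-q^n)$, and the microscoping argument as you set it up does not even deliver the congruence modulo $\Phi_n(q)^2$, let alone modulo $\Phi_n(q)^3$. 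One could try to repair this in the Guo--Zudilin style by showing $S(a)\equiv q^{1-n}[n]^2\pmod{(1-aq^n)(a-q^n)}$, but that still only yields information modulo $\Phi_n(q)^2$ at $a=1$; the decisive upgrade to $\Phi_n(q)^3$ with the exact value $q[n]^2$ is precisely what you leave open, and your proposed fix --- a Taylor expansion of ``the product form'' in $a-1$ --- is not available, since the product evaluation is valid only at the two isolated points $a=q^{\pm n}$, not in a neighbourhood of $a=1$. Second, your treatment of the proper divisors $d\mid n$ argues ``at $q=\zeta$'', i.e.\ modulo $\Phi_d(q)$ only, whereas the theorem requires $\Phi_d(q)^2$; moreover the claim that $(q;q^2)_k^2$ containing $(1-q^d)^2$ forces the term into $\Phi_d(q)^2\Z[q]$ ignores that $(q^2;q^2)_k^2$ also vanishes at $\zeta$ for $k\geqslant d$, so a genuine valuation count (and a block congruence modulo $\Phi_d(q)^2$, not an evaluation) would be needed.

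The paper's proof is far simpler and bypasses all of this: one proves by induction on $n$ the exact identity
\begin{equation*}
\sum_{k=0}^{n-1}[4k+1]\frac{(q;q^2)_k^2}{(q^2;q^2)_k^2}\,q^{-k}
=[n]^2(1+q^n)^2\frac{(q;q^2)_n^2}{(q^2;q^2)_n^2}\,q^{1-n}
=[n]^2\begin{bmatrix}2n-1\\ n-1\end{bmatrix}^2\frac{q^{1-n}}{(-q;q)_{n-1}^4},
\end{equation*}
so the factor $[n]^2$ is exhibited exactly (making any CRT/divisor analysis unnecessary), and the congruence then follows from the elementary facts $\begin{bmatrix}2n-1\\ n-1\end{bmatrix}\equiv 1$ and $(-q;q)_{n-1}\equiv 1\pmod{\Phi_n(q)}$ together with $q^{1-n}\equiv q\pmod{\Phi_n(q)}$; the second congruence \eqref{eq:t1-2} is handled by the analogous identity and the relation $(q^{-1};q^2)_n=(q;q^2)_n(1-q^{-1})/(1-q^{2n-1})$. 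Notably, your own (uncomputed) specialization $S(q^{\pm n})=q^{1-n}[n]^2$ is exactly the shape of this closed form, which suggests the productive move is to look for an exact evaluation of the truncated sum rather than a congruence-only microscoping argument.
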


\begin{theorem}\label{thm:2}
Let $d\geqslant 4$ be an even integer and let $n$ be a positive integer
with $n\equiv -1\pmod{d}$. Then
\begin{equation}
\sum_{k=0}^{n-1}[2dk+1]\frac{(q;q^d)_k^d}{(q^d;q^d)_k^d}q^{\frac{d(d-3)k}{2}}
\equiv 0\pmod{\Phi_n(q)^2}.  \label{eq:new-1}
\end{equation}
\end{theorem}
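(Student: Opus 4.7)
The plan is to model the argument on the authors' proof of \eqref{odd-1} for odd $d$ in \cite{GS19}, which proceeds by \emph{creative microscoping} in the style of Guo and Zudilin. We introduce a free parameter $a$, prove a parametric $q$-congruence modulo a product of three essentially coprime factors, and then recover \eqref{eq:new-1} by specializing $a=1$.

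\textbf{Step 1 (Parametric extension).} Write $n=dm-1$ and note that $\gcd(d,n)=\gcd(d,-1)=1$. Consider the perturbed sum obtained by promoting two of the $d$ copies of the factor $(q;q^d)_k/(q^d;q^d)_k$ to $a$-dependent versions:
\begin{equation*}
S_n(a,q):=\sum_{k=0}^{n-1}[2dk+1]\,
\frac{(aq,q/a,q,\ldots,q;q^d)_k}{(aq^d,q^d/a,q^d,\ldots,q^d;q^d)_k}\,q^{d(d-3)k/2},
\end{equation*}
with $d-2$ remaining copies of $(q;q^d)_k$ in the numerator and of $(q^d;q^d)_k$ in the denominator. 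The aim is to prove
\begin{equation*}
S_n(a,q)\equiv 0\pmod{(1-aq^n)(a-q^n)\,\Phi_n(q)}
\qquad\text{whenever }n\equiv-1\pmod d.
\end{equation*}
Since these three factors are pairwise coprime in $\Z[a,q]$, it is enough to verify the vanishing of $S_n(a,q)$ modulo each factor separately. Once this is established, setting $a=1$ gives $(1-q^n)^2\Phi_n(q)\mid S_n(1,q)$, and hence $\Phi_n(q)^3\mid S_n(1,q)$, which is in fact stronger than \eqref{eq:new-1}.

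\textbf{Step 2 (Vanishing modulo $(1-aq^n)(a-q^n)$).} The sum $S_n(a,q)$ is invariant under $a\leftrightarrow 1/a$, so the two linear factors yield equivalent statements and only $a=q^{-n}$ needs to be treated. Substituting $a=q^{-n}$, one gets a terminating very-well-poised basic hypergeometric series; the plan is to identify the correct closed-form evaluation -- most likely Andrews' terminating transformation (alluded to in the keywords) or a $q$-Dixon/Jackson--type identity -- which forces this specialization to equal zero. This mirrors the role played by the analogous identity in the odd-$d$ proof of \eqref{odd-1}.

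\textbf{Step 3 (Vanishing modulo $\Phi_n(q)$).} At $q=\zeta$ a primitive $n$-th root of unity, split the sum at $k=(d-1)m$. For $k\geqslant (d-1)m$ the factor $(aq,q/a;q^d)_k$ acquires the zero $(1-q^{(d-1)n})$ (since $1+((d-1)m-1)d=(d-1)n$), so each such term lies in $\Phi_n(q)\,\Z[a][q]$, in fact in $\Phi_n(q)^2$ thanks to the remaining $(q;q^d)_k^{d-2}$ with $d-2\geqslant 2$. For the head $0\leqslant k\leqslant (d-1)m-1$, apply the reversal $k\mapsto(d-1)m-1-k$: using $\zeta^n=1$ together with the reversal identities $(\zeta;\zeta^d)_{(d-1)m-1-k}/(\zeta;\zeta^d)_k$ expressible via $(\zeta;\zeta^d)_{(d-1)m-1}$, one shows that paired terms cancel exactly in $\Z[a]/\Phi_n(\zeta)$.

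\textbf{Main obstacle.} Step 3 is essentially the same reversal calculation as in the odd-$d$ case and should carry over with only bookkeeping changes. The substantive difficulty is Step 2: even $d$ places the exponent $d(d-3)k/2$ in a different congruence class than odd $d$ (the half-integer disappears), and the very-well-poised structure at $a=q^{-n}$ has to be matched against the precise hypothesis of Andrews' transformation. Verifying that the parameter alignment really produces a vanishing right-hand side -- in particular tracking signs and the power of $q$ in the balance -- is where I expect the real work to lie.
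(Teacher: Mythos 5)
There is a genuine gap: the parametric congruence you set as the target in Step 1 is false in general. If $S_n(a,q)\equiv 0\pmod{(1-aq^n)(a-q^n)\Phi_n(q)}$ held, then putting $a=1$ (the $a$-dependent denominators become $(q^d;q^d)_k^2$, which are coprime to $\Phi_n(q)$ since $\gcd(d,n)=1$ and $k\leqslant n-1$) would give precisely what you yourself note, namely $\Phi_n(q)^3$ dividing the sum in \eqref{eq:new-1}. But the paper states explicitly, right before Theorem~\ref{thm:d=n+1}, that \eqref{eq:new-1} does \emph{not} hold modulo $\Phi_n(q)^3$ in general for even $d$; this is exactly why the even case is treated separately from \eqref{odd-1} and why the refinement in Theorem~\ref{thm:d=n+1} only adjoins the coprime factor $\Phi_{dn-n}(q)$. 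Hence at least one of your Steps 2 and 3 must fail, and Step 2 in particular cannot be carried out as planned: with $n\equiv-1\pmod d$, the specialization $a=q^{-n}$ turns the deformed pair into $(q^{1-n},q^{1+n};q^d)_k$ with $1-n\equiv 2$ and $1+n\equiv 0\pmod{d}$, so neither factor is of the form $q^{1-dN}$ in base $q^d$; the sum does not acquire the terminating very-well-poised structure needed for the terminating ${}_6\phi_5$ summation or for Andrews' transformation \eqref{andrews}, and there is no identity forcing $S_n(q^{-n},q)$ to vanish. (Incidentally, the cited proof of \eqref{odd-1} in \cite{GS19} is not a creative-microscoping argument either; it also goes through Andrews' transformation.)

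For comparison, the paper's proof avoids a free parameter altogether: by \eqref{eq:mod-square} with $\alpha=d-1$ it replaces two of the $d$ factors $(q;q^d)_k$ by $(q^{1+(d-1)n},q^{1-(d-1)n};q^d)_k$ modulo $\Phi_n(q)^2$. The shift $(d-1)n$ (rather than $n$) is the crucial alignment: since $(d-1)n\equiv 1\pmod d$, the factor $(q^{1-(d-1)n};q^d)_k$ terminates the sum at $k=((d-1)n-1)/d\leqslant n-1$, so the series fits the terminating ${}_6\phi_5$ summation (for $d=4$) or Andrews' transformation \eqref{andrews} with $N=((d-1)n-1)/d$. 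The resulting prefactor $(q^{d+1},q^{d+n-dn-1};q^d)_N$ supplies the two factors $1-q^{(d-1)n}$ and $1-q^{(2-d)n}$, hence $\Phi_n(q)^2$, and Lemma~\ref{lem:one} guarantees that the denominators of the multiple sum (in particular $(q^{2};q^d)_{l_1+\cdots+l_{m-1}}$) remain coprime to $\Phi_n(q)$. If you wish to rescue a parameter-based argument, you would at least have to drop the extra $\Phi_n(q)$ from the target and deform so that the relevant specialization corresponds to the shift $(d-1)n$, which essentially reproduces the paper's use of \eqref{eq:mod-square}.
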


\begin{theorem}\label{thm:3}
Let $d\geqslant 4$ be an even integer and let $n>1$ be an integer
with $n\equiv 1\pmod{d}$. Then
\begin{equation}
\sum_{k=0}^{n-1}[2dk-1]\frac{(q^{-1};q^d)_k^d}{(q^d;q^d)_k^d}q^{\frac{d(d-1)k}{2}}
\equiv 0\pmod{\Phi_n(q)^2}.  \label{eq:new-2}
\end{equation}
\end{theorem}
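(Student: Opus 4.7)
The plan is to mirror the proof strategy of Theorem~\ref{thm:2}, adjusting for the parameter change from $(q;q^d)_k^d$ to $(q^{-1};q^d)_k^d$, from $[2dk+1]$ to $[2dk-1]$, from the exponent $\frac{d(d-3)k}{2}$ to $\frac{d(d-1)k}{2}$, and from the residue class $n\equiv -1\pmod d$ to $n\equiv 1\pmod d$. Writing $n=dm+1$, I would first recognise the left-hand side of \eqref{eq:new-2} as a terminating very-well-poised basic hypergeometric series in base $q^d$, with a numerator parameter of the form $q^{1-n}=q^{-dm}$ enforcing truncation.

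The engine of the argument is Andrews' multi-series transformation, which rewrites the single sum as an explicit prefactor times a nested multi-sum in indices $(j_1,\ldots,j_{(d-2)/2})$. The prefactor carries a $q$-Pochhammer whose leading factor $1-q^n$ is manifestly divisible by $\Phi_n(q)$, supplying one power of $\Phi_n(q)$. To extract a second power, I would analyse the multi-sum term by term at $q=\zeta$, a primitive $n$-th root of unity: the condition $n\equiv 1\pmod d$ gives $d\cdot(n-m)\equiv 1\pmod n$, so any Pochhammer factor $(q^{-1};q^d)_i$ with $i\geqslant (d-1)m+2$ vanishes at $\zeta$. Summands whose running indices reach this threshold are killed, while the remaining summands must be arranged in pairs whose pooled contribution is a multiple of $\Phi_n(q)$, using a reflection symmetry of the Andrews-transformed sum that exchanges an index $j_i$ with a shifted complement.

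The principal obstacle will be the combinatorial bookkeeping required: for each admissible tuple $(j_1,\ldots,j_{(d-2)/2})$ one must exhibit the vanishing (or cancellation) mechanism, and in particular handle the boundary tuples where several Pochhammer zeros coincide, without double-counting the powers of $\Phi_n(q)$ obtained. Because $d\geqslant 4$ is even, the specialisation of the transformation parameters must be made so that the auxiliary parameters split into $(d-2)/2$ genuinely free summation indices and the remaining parameters match the pattern imposed by the $d$-fold Pochhammer $(q^{-1};q^d)_k^d$ in the original sum; verifying that this specialisation is legitimate is a technical but routine check. Once the step-by-step analysis is complete, multiplying the prefactor's $\Phi_n(q)$-divisibility by the multi-sum's $\Phi_n(q)$-divisibility delivers $\Phi_n(q)^2$-divisibility of the full sum, which is the statement of Theorem~\ref{thm:3}.
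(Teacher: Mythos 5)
Your proposal follows the paper's general toolbox (Andrews' transformation \eqref{andrews}), but two essential ingredients are missing or wrong. First, the left-hand side of \eqref{eq:new-2} is \emph{not} a terminating very-well-poised series as written: all $d$ numerator Pochhammers are $(q^{-1};q^d)_k$, there is no parameter $q^{1-n}$, and the sum is a truncation at $k=n-1$ of a nonterminating series. The step that makes Andrews' transformation applicable is the congruence \eqref{eq:mod-square}: modulo $\Phi_n(q)^2$ one replaces two of the $d$ copies of $(q^{-1};q^d)_k$ by $(q^{-1+(d-1)n},q^{-1-(d-1)n};q^d)_k$ (the choice $\alpha=d-1$ is forced by $n\equiv 1\pmod d$ so that the exponent $-1-(d-1)n$ is a multiple of $-d$), which simultaneously produces the required pair $c_m=q^{-1+(d-1)n}$, $q^{-dN}$ with $N=((d-1)n+1)/d$ and makes the sum terminate at $k=N\leqslant n-1$. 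Without this insertion your "recognition" step has nothing to transform, and the claimed truncation at $q^{1-n}=q^{-dm}$ (i.e.\ $N=m$) is simply incorrect: $(q^{-1};q^d)_k$ first vanishes at a primitive $n$-th root of unity only for $k\geqslant (d-1)m+2$, as you yourself note.

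Second, your mechanism for the factor $\Phi_n(q)^2$ is not the one that works, and the step you leave vague is exactly the hard point. In the paper's proof (Theorem~\ref{thm:5} with $r=-1$, which contains Theorem~\ref{thm:3}) \emph{both} powers of $\Phi_n(q)$ come from the prefactor $(q^{d+r},q^{d+n-dn-r};q^d)_{(dn-n-r)/d}$ produced by \eqref{andrews}, via the factors $1-q^{(d-1)n}$ and $1-q^{(2-d)n}$; no cancellation inside the multiple sum is needed, and no "reflection symmetry" of the Andrews multisum is known or used (the antisymmetry pairing in the paper appears only in the proof of Theorem~\ref{thm:d=n+1}, for the single sum modulo $\Phi_{dn-n}(q)$). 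What must instead be proved about the multisum is that its \emph{denominator}, after reduction, is coprime to $\Phi_n(q)$: the denominator contains $(q^{2r};q^d)_{l_1+\cdots+l_{m-1}}=(q^{-2};q^d)_{l_1+\cdots+l_{m-1}}$, which can very well vanish at a primitive $n$-th root of unity, and one needs Lemma~\ref{lem:one} to show that whenever it does, a compensating factor $1-q^{n}$ occurs in some $(q^{d-r};q^d)_{l_i}$ in the numerator. Your plan of "prefactor gives one $\Phi_n(q)$, pairing in the multisum gives another" both fails to supply the second power by a verifiable mechanism and ignores this denominator obstruction, so as it stands the argument does not go through.
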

Although neither \eqref{eq:new-1} nor \eqref{eq:new-2} holds modulo
$\Phi_n(q)^3$ in general, we have the following common refinement
of \eqref{odd-1} and \eqref{eq:new-1}.
\begin{theorem}\label{thm:d=n+1}
Let $d\geqslant 4$ be an integer and let $n$ be a positive integer
with $n\equiv -1\pmod{d}$. Then
\begin{equation}
\sum_{k=0}^{n-1}[2dk+1]\frac{(q;q^d)_k^d}{(q^d;q^d)_k^d}q^{\frac{d(d-3)k}{2}}
\equiv 0\pmod{\Phi_n(q)^2\Phi_{dn-n}(q)}.  \label{eq:2nk}
\end{equation}
\end{theorem}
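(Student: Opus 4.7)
The plan is to derive Theorem~\ref{thm:d=n+1} by combining the already-known divisibility by $\Phi_n(q)^2$ with a separately established divisibility by $\Phi_{dn-n}(q)=\Phi_{(d-1)n}(q)$, using the coprimality of these two cyclotomic polynomials when $d\ge4$. The first factor is provided by Theorem~\ref{thm:2} for even $d$ and by the first case of \eqref{odd-1} for odd $d\ge5$, so the new content is to show $\Phi_{(d-1)n}(q)\mid S_n(q)$, where $S_n(q)$ denotes the sum on the left side of \eqref{eq:2nk}.

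To this end, fix a primitive $(d-1)n$th root of unity $\zeta$, write $n=md-1$, and set $M:=(d-1)m-1$, so that the crucial arithmetic identity $dM+1=(d-1)n$ holds; in particular $\gcd(d,(d-1)n)=1$. Denoting the $k$th summand by $T_k(q)$, a direct inspection gives $(q;q^d)_k|_{q=\zeta}=0$ exactly when $k\ge M+1$, whereas $(q^d;q^d)_k|_{q=\zeta}\ne 0$ throughout $0\le k\le n-1$. Hence $T_k(\zeta)=0$ for $M+1\le k\le n-1$, and it suffices to prove $\sum_{k=0}^{M}T_k(\zeta)=0$.

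The heart of the proof is the pairing relation $T_{M-k}(\zeta)=-T_k(\zeta)$ for $0\le k\le M$. To establish it I would expand both $(q;q^d)_{M-k}$ and $(q^d;q^d)_{M-k}$ via $(a;q^d)_{M-k}=(a;q^d)_M/(aq^{d(M-k)};q^d)_k$ (with $a=q$ and $a=q^d$), substitute $q=\zeta$, use $dM\equiv-1\pmod{(d-1)n}$ to simplify the exponents, and combine with the reflection $[2d(M-k)+1]_\zeta=-\zeta^{-(2dk+1)}[2dk+1]_\zeta$. The prescribed weight $q^{d(d-3)k/2}$ is precisely what forces the $k$-dependent part of the resulting exponent of $\zeta$ to vanish (the coefficient of $k$ is $-2d+d(d-1)-d(d-3)=0$), leaving the $k$-independent identity
\[
\zeta^{d(d-3)M/2-1}\biggl(\frac{(\zeta;\zeta^d)_M}{(\zeta^d;\zeta^d)_M}\biggr)^{\!d}=1.
\]
This identity is the main obstacle. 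It follows from the observation that the involution $x\mapsto(d-1)n-x$ of $\mathbb{Z}/(d-1)n\mathbb{Z}$ interchanges the numerator exponent set $\{1+jd:0\le j<M\}$ with the denominator set $\{d,2d,\dots,dM\}$---an immediate consequence of $dM+1=(d-1)n$. Together with $|1-\zeta^x|=|1-\zeta^{(d-1)n-x}|$, this forces the modulus of the Pochhammer ratio to equal $1$, and a short phase computation (again using $dM+1=(d-1)n$) pins down the argument.

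Once the pairing is in hand, the involution $k\leftrightarrow M-k$ cancels the terms of $\sum_{k=0}^{M}T_k(\zeta)$ in pairs, except possibly at the fixed point $k=M/2$ when $M$ is even; but there $[2d(M/2)+1]_\zeta=[dM+1]_\zeta=[(d-1)n]_\zeta=0$, so $T_{M/2}(\zeta)=0$ directly. Hence $S_n(\zeta)=0$ for every primitive $(d-1)n$th root of unity $\zeta$, which gives $\Phi_{(d-1)n}(q)\mid S_n(q)$, and combined with $\Phi_n(q)^2\mid S_n(q)$ this yields the claimed congruence.
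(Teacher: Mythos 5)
Your proposal is correct and follows essentially the same route as the paper: the new content is the divisibility by $\Phi_{dn-n}(q)$, obtained by truncating the sum at $M=(dn-n-1)/d$ and pairing the $k$-th with the $(M-k)$-th term via an anti-symmetry, then combining with the known $\Phi_n(q)^2$ divisibility from \eqref{odd-1} and \eqref{eq:new-1} through coprimality of the two cyclotomic factors; evaluating at a primitive $(dn-n)$-th root of unity $\zeta$ is just a rephrasing of the paper's computation modulo $\Phi_{dn-n}(q)$. Your sketched constant identity $\zeta^{d(d-3)M/2-1}\bigl((\zeta;\zeta^d)_M/(\zeta^d;\zeta^d)_M\bigr)^d=1$ does check out (the exponent collapses to $L(L-3)/2$ with $L=dn-n$, matching the paper's verification that $(-1)^{dn-n}q^{(dn-n)(dn-n-3)/2}\equiv-1$), so the argument is sound.
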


Let $n=p$ be an odd prime and $d=p+1$ in \eqref{eq:2nk}.
Then letting $q\to 1$, we are led to
\begin{align}
\sum_{k=0}^{p-1}(2p+2k+1)\frac{(\frac{1}{p+1})_k^{p+1}}{k!^{p+1}}
\equiv 0\pmod{p^3}.  \label{eq:2p2k}
\end{align}
Note that Sun \cite[Theorem~1.2]{Sun} proved that for any prime $p>3$
\begin{align}
\sum_{k=0}^{p-1}\frac{(\frac{1}{p+1})_k^{p+1}}{k!^{p+1}}
\equiv 0\pmod{p^5}, \label{eq:sun}
\end{align}
which also holds modulo $p^3$ for $p=3$. Substituting \eqref{eq:sun}
into \eqref{eq:2p2k}, we arrive at the following conclusion.
\begin{corollary}\label{cor:five}
Let $p$ be an odd prime. Then
\begin{align*}
\sum_{k=0}^{p-1}k\frac{(\frac{1}{p+1})_k^{p+1}}{k!^{p+1}}\equiv 0\pmod{p^3}.
\end{align*}
\end{corollary}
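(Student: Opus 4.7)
The plan is to derive the corollary as a direct algebraic consequence of the two congruences already displayed just before its statement, namely \eqref{eq:2p2k} (which follows from Theorem~\ref{thm:d=n+1} by specialising $n=p$, $d=p+1$ and letting $q\to 1$) and Sun's congruence \eqref{eq:sun}. The key observation is that the coefficient $2p+2k+1$ appearing in \eqref{eq:2p2k} splits linearly into a multiple of the summand from \eqref{eq:sun} plus $2k$ times the summand we wish to bound, so a single subtraction should isolate the desired sum.

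Concretely, I would first split the left-hand side of \eqref{eq:2p2k} as
\begin{equation*}
\sum_{k=0}^{p-1}(2p+2k+1)\frac{(\tfrac{1}{p+1})_k^{p+1}}{k!^{p+1}}
=2\sum_{k=0}^{p-1}k\,\frac{(\tfrac{1}{p+1})_k^{p+1}}{k!^{p+1}}
+(2p+1)\sum_{k=0}^{p-1}\frac{(\tfrac{1}{p+1})_k^{p+1}}{k!^{p+1}}.
\end{equation*}
By \eqref{eq:2p2k} the left-hand side is $\equiv 0 \pmod{p^3}$. By \eqref{eq:sun} the second sum on the right is $\equiv 0 \pmod{p^5}$ when $p>3$ and $\equiv 0\pmod{p^3}$ when $p=3$, so in either case $(2p+1)$ times it is $\equiv 0\pmod{p^3}$. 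Subtracting therefore gives
\begin{equation*}
2\sum_{k=0}^{p-1}k\,\frac{(\tfrac{1}{p+1})_k^{p+1}}{k!^{p+1}}
\equiv 0\pmod{p^3}.
\end{equation*}

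Finally, since $p$ is odd, $2$ is invertible modulo $p^3$, so we may divide through by $2$ to conclude
\begin{equation*}
\sum_{k=0}^{p-1}k\,\frac{(\tfrac{1}{p+1})_k^{p+1}}{k!^{p+1}}
\equiv 0\pmod{p^3},
\end{equation*}
as desired. There is essentially no obstacle: the proof is entirely bookkeeping, with the only minor subtlety being to handle the case $p=3$ separately (using the $p^3$ version of \eqref{eq:sun} mentioned in the excerpt rather than the $p^5$ version) and to note that the factor $2$ is a unit modulo $p^3$ for any odd prime $p$.
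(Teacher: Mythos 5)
Your proposal is correct and is essentially the paper's own argument: the paper derives Corollary~\ref{cor:five} exactly by substituting Sun's congruence \eqref{eq:sun} into \eqref{eq:2p2k}, i.e.\ splitting the factor $2p+2k+1$ as $2k+(2p+1)$ and cancelling the unit $2$ modulo $p^3$. Your explicit handling of the case $p=3$ via the $p^3$ version of \eqref{eq:sun} matches the remark in the paper, so there is nothing to add.
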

This result is actually a special case of
\begin{align}\label{gao}
\sum_{k=0}^{p-1}k\frac{(\frac{1}{p+1})_k^{p+1}}{k!^{p+1}}\equiv
\frac{p^3}4-\frac{p^4}8\pmod{p^5},
\end{align}
that was conjectured by Sun and was subsequently proved by
Gao~\cite{Gao} in her master thesis. See the discussion around
Equation (1.3) in  Wang's paper \cite{Wang} where \eqref{gao} is
further generalized to a congruence modolo $p^6$ for $p>5$ that involves
Bernoulli numbers.
In Section~\ref{sec:final} we propose an extension of
Corollary~\ref{cor:five} which contains additional factors in the summand
(see Conjecture~\ref{c1}).

%At this point, we would like to make the following conjecture.
%\begin{conjecture}\label{conj:one}
%Let $p>3$ be a prime. Then
%\begin{align*}
%\sum_{k=0}^{p-1}k\frac{(\frac{1}{p+1})_k^{p+1}}{k!^{p+1}}
%\equiv \frac{p^3}{4}\pmod{p^4}.
%\end{align*}
%\end{conjecture}
%More conjectures are given in the final section.

The paper is organized as follows. We shall prove Theorem~\ref{thm:1}
in Section~\ref{sec:thm1} based on two $q$-series identities.
Theorems~\ref{thm:2} and \ref{thm:3} will be proved by giving a
common generalization of them in Section~\ref{sec:thms23}.
To accomplish this we shall make a careful use of Andrews' multiseries
generalization of the Watson transformation~\cite[Theorem~4]{Andrews75}
(which was already used in \cite{GJZ} to prove some $q$-analogues of
Calkin's congruence~\cite{Calkin}, and which was also applied in \cite{GS19}
for proving some analogous results involving the base being odd powers of $q$).
We shall prove Theorem~\ref{thm:d=n+1} by using a certain anti-symmetry
of the $k$-th summand on the left-hand side of \eqref{eq:2nk} in
Section~\ref{sec:thm4}.
Finally, in Section~\ref{sec:final} we give some concluding remarks and state
some open problems. These include some conjectural $q$-congruences modulo
the fifth power of a cyclotomic polynomial and congruences for truncated
ordinary hypergeometric series, one of them, see \eqref{eq:p7},
modulo the seventh power of a prime greater than $3$.

%\section*{Acknowledgements}
We would like to thank the two anonymous referees for their comments.
We especially thank the second referee for her or his detailed list of constructive
suggestions for improvement of the paper.

%%%%%%%%%%%%%%%%%%%%%%%%%%%%%%%%%%%%%%%%%%%%%%%%%%%%%%%%%%%%%%%%%%%%%%%%%%%%%%%%%%%%%%%%%%%%%%%%%%%%%%%%%%%%%%%%%%%%%%%%%%%%%%%%%%%%%%%%%%%%%%%%%%%%%%%%%%%%%%%%%%%%%%%%%%%%%%%%%%%%%%%%%%%%%%%%%%%%%%%%%%%%%%%%%%%%%%%%%%%%%%%%%%%%%
\section{Proof of Theorem \ref{thm:1}}\label{sec:thm1}
It is easy to prove by induction on $n$ that
\begin{align*}
\sum_{k=0}^{n-1}[4k+1]\frac{(q;q^2)_k^2}{(q^2;q^2)_k^2} q^{-k}
=[n]^2(1+q^n)^2\frac{(q;q^2)_n^2}{(q^2;q^2)_n^2}q^{1-n}
=[n]^2\begin{bmatrix}2n-1\\n-1\end{bmatrix}^2\frac{q^{1-n}}{(-q;q)_{n-1}^4}.
\end{align*}
Since $q^n\equiv 1\pmod{\Phi_n(q)}$, the proof of \eqref{eq:t1-1}
then follows from the fact
$$
\begin{bmatrix}2n-1\\n-1\end{bmatrix}=
\prod_{k=1}^{n-1}\frac{1-q^{2n-k}}{1-q^k}
\equiv q^{-\binom n2}(-1)^{n-1}\equiv 1\pmod{\Phi_n(q)}
$$
for odd $n$ and $(-q;q)_{n-1}\equiv 1\pmod{\Phi_n(q)}$
(see, for example, \cite[Equation~(2.3)]{Guo2}).

Similarly, we can prove by induction that
\begin{align*}
\sum_{k=0}^{n-1}[4k-1]\frac{(q^{-1};q^2)_k^2}{(q^2;q^2)_k^2} q^{k}
=-[n]^2(1+q^n)^2\frac{(q^{-1};q^2)_n^2}{(q^2;q^2)_n^2}q^{n}.
\end{align*}
The proof of \eqref{eq:t1-2} then follows from that of \eqref{eq:t1-1}
and the following relation
$$
(q^{-1};q^2)_n=(q;q^2)_{n}\frac{1-q^{-1}}{1-q^{2n-1}}.
$$

We point out that, using the congruence
$(-q;q)_{(n-1)/2}^2\equiv q^{(n^2-1)/8}\pmod{\Phi_n(q)}$ for odd $n$
(see, for example, \cite[Lemma~2.1]{Guo2}), we can prove the following
similar congruences: for any odd positive integer $n>1$,
\begin{align}
\sum_{k=0}^{(n-1)/2}[4k+1]\frac{(q;q^2)_k^2}{(q^2;q^2)_k^2} q^{-k}
\equiv q[n]^2\pmod{[n]^2\Phi_n(q)}, \\[5pt]
\sum_{k=0}^{(n+1)/2}[4k-1]\frac{(q^{-1};q^2)_k^2}{(q^2;q^2)_k^2} q^{k}
\equiv -[n]^2\pmod{[n]^2\Phi_n(q)}.
\end{align}
The details of the proof are left to the interested reader.

%%%%%%%%%%%%%%%%%%%%%%%%%%%%%%%%%%%%%%%%%%%%%%%%%%%%%%%%%%%%%%%%%%%%%%%%%%%%%%%%%%%%%%%%%%%%%%%%%%%%%%%%%%%%%%%%%%%%%%%%%%%%%%%%%%%%%%%%%%%%%%%%%%%%%%%%%%%%%%%%%%%%%%%%%%%%%%%%%%%%%%%%%%%%%%%%%%%%%%%%%%%%%%%%%%%%%%%%%%%%%%%%%%%%%%
\section{Proof of Theorems \ref{thm:2} and \ref{thm:3}}\label{sec:thms23}
We shall first prove the following unified generalization of
Theorems \ref{thm:2} and \ref{thm:3} for $d=4$.
\begin{theorem}\label{thm:d=4}
Let $r$ be an odd integer. Let $n>1$ be an odd integer with
$n\equiv -r\pmod{4}$ and $n\geqslant \max\{r,4-r\}$. Then
\begin{equation}
\sum_{k=0}^{n-1}[8k+r]\frac{(q^r;q^4)_k^4}{(q^4;q^4)_k^4}q^{(4-2r)k}
\equiv 0\pmod{\Phi_n(q)^2}.  \label{eq:new-3}
\end{equation}
\end{theorem}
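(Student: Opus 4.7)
The plan is to emulate the strategy of~\cite{GS19}: namely, to apply Andrews' multiseries generalization of Watson's transformation~\cite[Theorem~4]{Andrews75} to the sum in~\eqref{eq:new-3} in order to expose a factor of $\Phi_n(q)^2$.

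First, I would recast the left-hand side of~\eqref{eq:new-3} as a very-well-poised basic hypergeometric series in base $Q=q^4$ with $a=q^r$. Taking the three ``free'' upper very-well-poised parameters all equal to $b=q^r$ and the argument $aQ/b^3=q^{4-2r}$ exactly reproduces the summand, so (up to the upper summation limit $k=n-1$) the sum has the shape of a ${}_6W_5$. To convert this to a terminating ${}_{2m+6}W_{2m+5}$ to which Andrews' transformation applies, I would adjoin a termination parameter of the form $q^{-4N}$, together with any further auxiliary parameters needed to match the hypotheses of~\cite[Theorem~4]{Andrews75}. The discrepancy between the resulting terminating series and the truncated sum in~\eqref{eq:new-3} would then be controlled by the following observation: under the assumption $n\equiv -r\pmod 4$ together with $n\geqslant\max\{r,4-r\}$, the factor $(q^r;q^4)_k^4$ vanishes to order at least~$4$ at any primitive $n$-th root of unity~$\zeta$ for $k$ in the tail range $(3n-r+4)/4\leqslant k\leqslant n-1$ (since $4j\equiv -r\pmod n$ has a unique solution $j_0=(3n-r)/4\in\{0,\dots,n-1\}$, and $(q^4;q^4)_k$ stays nonzero at $\zeta$ for $k<n$), so such tail terms are individually $\equiv 0\pmod{\Phi_n(q)^4}$ and do not affect the congruence modulo $\Phi_n(q)^2$.

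Having applied Andrews' transformation, I would then analyze the resulting multi-sum modulo $\Phi_n(q)^2$. The transformation produces a prefactor of the form $(aQ,aQ/de;Q)_N/(aQ/d,aQ/e;Q)_N$, into which---by virtue of $n\equiv -r\pmod 4$---one factor of $\Phi_n(q)$ should drop out for free (one of the numerator $q$-shifted factorials having a simple zero at $q=\zeta$), while a second factor of $\Phi_n(q)$ should come from each inner summand for the same reason. Combining the two yields the desired divisibility by $\Phi_n(q)^2$.

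The main obstacle will be the careful matching of parameters and the choice of $m$ in Andrews' transformation (likely a small value such as $m=0$ or $m=1$ for the case $d=4$), so that both factors of $\Phi_n(q)$ appear cleanly in the transformed expression without spurious cancellations in the multi-sum destroying the analysis modulo $\Phi_n(q)^2$. Once Theorem~\ref{thm:d=4} is established, its specializations $r=1$ and $r=-1$ immediately recover Theorem~\ref{thm:2} and Theorem~\ref{thm:3} in the case $d=4$, and the remaining odd values of $r$ yield additional $q$-congruences not covered by those theorems.
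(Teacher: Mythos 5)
Your overall plan (recast the sum as a very-well-poised series in base $q^4$ with $a=q^r$ and apply a Watson/Andrews-type result) is the right family of tools---in fact the paper's proof of this theorem is exactly an application of the terminating $_6\phi_5$ summation, which is Andrews' transformation with $m=1$, where the multiple sum is empty. However, your proposal has a genuine gap at the decisive step: how to pass from the truncated sum over $0\leqslant k\leqslant n-1$ to a terminating very-well-poised series. Adjoining a termination parameter $q^{-4N}$ (plus unspecified ``auxiliary parameters'') changes \emph{every} term of the series, not just the tail, so the discrepancy is not controlled by your observation that the terms with $(3n-r)/4<k\leqslant n-1$ are divisible by $\Phi_n(q)^4$; that observation only disposes of the tail of the original sum and says nothing about the altered terms with $k\leqslant (3n-r)/4$. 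The missing idea is the elementary but crucial congruence \eqref{eq:mod-square}, i.e.\ $(q^{r-\alpha n},q^{r+\alpha n};q^4)_k\equiv(q^r;q^4)_k^2\pmod{\Phi_n(q)^2}$, together with its analogue with $r$ replaced by $4$ for the denominator: since $3n\equiv r\pmod 4$, one may take $\alpha=3$ and replace two of the four copies of $q^r$ upstairs by the pair $q^{r\pm 3n}$ and two copies of $q^4$ downstairs by $q^{4\pm 3n}$ without changing the sum modulo $\Phi_n(q)^2$; the parameter $q^{r-3n}=q^{-4(3n-r)/4}$ then terminates the series at $k=(3n-r)/4$ automatically (and simultaneously shows the discarded terms of the original sum are harmless). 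Without this replacement your plan does not get off the ground.

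Your expectation of where the two powers of $\Phi_n(q)$ arise is also off. After the $_6\phi_5$ summation the right-hand side is a single closed-form quotient of $q$-shifted factorials (there is no residual inner sum when $d=4$), namely $[r]\,(q^{r+4},q^{4-3n-r};q^4)_{(3n-r)/4}\big/(q^4,q^{4-3n};q^4)_{(3n-r)/4}$, and \emph{both} factors of $\Phi_n(q)$ come from its numerator: $(q^{r+4};q^4)_{(3n-r)/4}$ contains $1-q^{3n}$, while $(q^{4-3n-r};q^4)_{(3n-r)/4}$ contains $1-q^{-2n}$ (this is where the hypotheses $n\geqslant r$ and $n\geqslant 4-r$ enter); one must then check separately that the denominator is coprime to $\Phi_n(q)$. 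There is no ``one factor from the prefactor, one factor from each inner summand'' mechanism---even in the paper's treatment of $d\geqslant 6$ via Andrews' transformation both factors of $\Phi_n(q)$ come from the prefactor, and the substantive work there is to show the multiple sum contributes no $\Phi_n(q)$ in denominators---so the second half of your plan would need to be redone along these lines.
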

\begin{proof}
Let $\alpha$, $j$ and $r$ be integers. It is easy to see that
\begin{equation*}
(1-q^{\alpha n-dj+d-r})(1-q^{\alpha  n+dj-d+r})+(1-q^{dj-d+r})^2
q^{\alpha  n-dj+d-r}=(1-q^{\alpha  n})^2
\end{equation*}
and $1-q^{\alpha n}\equiv 0\pmod{\Phi_n(q)}$, and so
\begin{equation*}
(1-q^{\alpha  n-dj+d-r})(1-q^{\alpha  n+dj-d+r})\equiv -(1-q^{dj-d+r})^2
q^{\alpha  n-dj+d-r}\pmod{\Phi_n(q)^2}.
\end{equation*}
It follows that
\begin{equation}
(q^{r-\alpha  n},q^{r+\alpha  n};q^d)_k \equiv (q^r;q^d)_k^2 \pmod{\Phi_n(q)^2}.
\label{eq:mod-square}
\end{equation}
It is clear that $3n\equiv r\pmod{4}$. Therefore,
by \eqref{eq:mod-square} and the $q\mapsto q^4$, $a\mapsto q^r$,
$b\mapsto q^r$, $c\mapsto q^{r+3n}$, $n\mapsto (3n-r)/4$
instance of the terminating ${}_6\phi_5$ summation
(see \cite[Appendix (II.21)]{GR}):
\begin{align*}
_6\phi_5
\left[\begin{array}{c}
a,\, qa^{\frac12},\, -qa^{\frac12},\, b,\, c,\, q^{-n} \\
a^{\frac12},\, -a^{\frac12},\, aq/b,\, aq/c,\, aq^{n+1}\end{array};
q,\,\frac{aq^{n+1}}{bc}
\right]
=\frac{(aq, aq/bc;q)_n}{(aq/b, aq/c;q)_n},
\end{align*}
where the {\it basic hypergeometric series $_{r+1}\phi_r$}
(see \cite{GR}) is defined as
$$
_{r+1}\phi_{r}\left[\begin{array}{c}
a_1,a_2,\ldots,a_{r+1}\\
b_1,b_2,\ldots,b_{r}
\end{array};q,\, z
\right]
=\sum_{k=0}^{\infty}\frac{(a_1,a_2,\ldots, a_{r+1};q)_k z^k}
{(q,b_1,\ldots,b_{r};q)_k},
$$
modulo $\Phi_n(q)^2$, the left-hand side of \eqref{eq:new-3} is congruent to
\begin{align}
\sum_{k=0}^{(3n-r)/4}[8k+r]\frac{(q^r,q^r,q^{r+3n}, q^{r-3n};q^4)_k }
{(q^4, q^4, q^{4-3n}, q^{4+3n};q^4)_k} q^{(4-2r)k}
=[r]\frac{(q^{r+4}, q^{4-3n-r};q^4)_{(3n-r)/4}}{(q^4, q^{4-3n};q^4)_{(3n-r)/4}}.
\label{eq:6phi5-2}
\end{align}
Note that $(3n-r)/4\leqslant n-1$ by the condition $n\geqslant 4-r$.
It is clear that $(q^{r+4};q^4)_{(3n-r)/4}$ has the factor $1-q^{3n}$,
and $(q^{4-r-3n};q^4)_{(3n-r)/4}$ has the factor $(1-q^{-2n})$
since $(3n-r)/4\geqslant (n+r)/4$ by the condition $n\geqslant r$.
Therefore the numerator on the right-hand side of \eqref{eq:6phi5-2}
is divisible by $\Phi_n(q)^2$,
while the denominator is relatively prime to $\Phi_n(q)$.
This completes the proof.
\end{proof}

We need the following lemma in our proof of
Theorems \ref{thm:2} and \ref{thm:3} for $d\geqslant 6$.
\begin{lemma}\label{lem:one}
Let $d\geqslant 5$ be an integer and let $r$ be an integer with $\gcd(d,r)=1$.
Let $n=ad-r\geqslant r$ with $a\geqslant 1$.
Suppose that $2r+kd\equiv 0\pmod{n}$ for some $k>0$. Then $k\geqslant a(d-4)/2$.
\end{lemma}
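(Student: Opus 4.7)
The idea is to convert the multiplicative congruence on $2r+kd$ into a purely additive one on $k$ by exploiting the coprimality of $d$ and $n$, and then verify the resulting arithmetic bound against the hypothesis $n\geqslant r$.

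The first step is to observe that $\gcd(d,n)=1$. Indeed, $n=ad-r$ gives $n\equiv -r\pmod d$, so $\gcd(d,n)=\gcd(d,r)=1$ by assumption; in particular, $d$ is invertible modulo $n$.

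The second step uses $r\equiv ad\pmod n$ (again from $n=ad-r$) to rewrite
\[
2r+kd\equiv (2a+k)\,d\pmod n.
\]
Cancelling the unit $d$, the hypothesis $2r+kd\equiv 0\pmod n$ becomes $2a+k\equiv 0\pmod n$. Since $a\geqslant 1$ and $k>0$, the integer $2a+k$ is a strictly positive multiple of $n$, hence $2a+k\geqslant n$, and therefore $k\geqslant n-2a=a(d-2)-r$.

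The last step is a routine inequality: the desired bound $k\geqslant a(d-4)/2$ follows from $a(d-2)-r\geqslant a(d-4)/2$, which, after clearing denominators, is equivalent to $ad\geqslant 2r$, and this is immediate from the hypothesis $n=ad-r\geqslant r$. There is no real obstacle in this argument; the only conceptual point worth flagging is that the coprimality $\gcd(d,n)=1$ is exactly what permits cancellation of $d$ and reduces the congruence to a clean linear inequality on $k$.
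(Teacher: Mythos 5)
Your proof is correct and follows essentially the same route as the paper: both reduce the hypothesis to $(k+2a)d\equiv 0\pmod{n}$ using $\gcd(d,n)=1$, deduce $k+2a\geqslant n=ad-r$, and finish with the inequality $ad\geqslant 2r$ coming from $n\geqslant r$. No gaps.
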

\begin{proof}Since $\gcd(d,r)=1$, we have $\gcd(n,d)=1$ for $n=ad-r$.
Noticing that
$$
2r+kd=(k+2a)d-2(ad-r)=(k+2a)d-2n,
$$
we conclude that $(k+2a)d\equiv 0\pmod{n}$.
It follows that $k+2a$ is a multiple of $n$ and so $k+2a\geqslant n$, i.e.,
$$
k+2a\geqslant ad-r.
$$
By the condition $ad-r\geqslant r$ in the lemma, we get $r\leqslant ad/2$.
Substituting this into the above inequality, we obtain the desired result.
\end{proof}

We now give a common generalization of Theorems \ref{thm:2} and \ref{thm:3}.
%for general $d$.
\begin{theorem}\label{thm:5}
Let $d\geqslant 4$ be an even integer and let $r$ be an integer with
$\gcd(d,r)=1$. Let $n>1$ be an integer with $n\equiv -r\pmod{d}$
and $n\geqslant \max\{r,d-r\}$. Then
\begin{equation}
\sum_{k=0}^{n-1}[2dk+r]\frac{(q^r;q^d)_k^d}{(q^d;q^d)_k^d}q^{\frac{d(d-r-2)k}{2}}
\equiv 0\pmod{\Phi_n(q)^2}.  \label{eq:new-d}
\end{equation}
\end{theorem}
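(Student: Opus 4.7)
The plan is to extend the $d=4$ argument of Theorem~\ref{thm:d=4} to all even $d\ge 6$ by substituting Andrews' multiseries generalization of the Watson transformation~\cite[Theorem~4]{Andrews75} for the $_6\phi_5$ summation used there. Writing $d=2s+2$, I would first apply the congruence \eqref{eq:mod-square} to the summand of \eqref{eq:new-d}, replacing $s$ of the $d/2$ pairs of factors $(q^r;q^d)_k^2/(q^d;q^d)_k^2$ modulo $\Phi_n(q)^2$ by the shifted analogues $(q^{r-\alpha_i n},q^{r+\alpha_i n};q^d)_k/(q^{d-\alpha_i n},q^{d+\alpha_i n};q^d)_k$, for suitably chosen nonzero integers $\alpha_1,\ldots,\alpha_s$. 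Combined with the one remaining (unchanged) pair $(q^r;q^d)_k^2/(q^d;q^d)_k^2$ and the very-well-poised factor $[2dk+r]$, this recasts the sum (modulo $\Phi_n(q)^2$) as a terminating very-well-poised basic hypergeometric series in base $q^d$ with $a=q^r$, of type ${}_{2s+4}\phi_{2s+3}$. The last shift $\alpha_s$ is chosen so that one top parameter becomes $q^{-dN}$ with $0\le N\le n-1$; the hypothesis $n\ge\max\{r,d-r\}$ is exactly what ensures this termination is compatible with the original range of summation.

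Second, I would apply Andrews' transformation to this terminating series, converting it into a multi-sum over nonnegative integer indices $j_1,\ldots,j_m$. The summand of the resulting multi-sum carries in its numerator the shifted Pochhammer pairs $(q^{r-\alpha_i n},q^{r+\alpha_i n};q^d)_{j_i}$ together with the usual well-poised $q$-integer ingredients, and in its denominator factors of the form $(q^d;q^d)_{j_i}$ and $(q^{d\pm\alpha_i n};q^d)_{j_i}$.

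Third, I would verify that every term of this multi-sum is congruent to $0$ modulo $\Phi_n(q)^2$. The numerator of each summand contains at least two factors of the form $1-q^{\alpha_i n}$, each divisible by $\Phi_n(q)$; the main difficulty is to rule out denominator factors divisible by $\Phi_n(q)$ that might cancel them. Here Lemma~\ref{lem:one} is the essential tool: it bounds from below the value of $k$ for which $2r+kd\equiv 0\pmod{n}$ can hold, and the ranges of the inner indices $j_i$, together with $n\ge\max\{r,d-r\}$, preclude such values of $k$ from occurring. Consequently each denominator factor remains coprime to $\Phi_n(q)$ throughout the summation range, no cancellation occurs, and the multi-sum vanishes modulo $\Phi_n(q)^2$. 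This coordinated $\Phi_n(q)$-bookkeeping across a genuinely multivariate sum is the main obstacle, and is the reason the full Andrews transformation, rather than a single hypergeometric evaluation as in the $d=4$ case, is required for $d\ge 6$.
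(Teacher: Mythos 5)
Your high-level skeleton (reduce the summand via \eqref{eq:mod-square}, transform with Andrews' formula \eqref{andrews}, control $\Phi_n(q)$-factors in denominators via Lemma~\ref{lem:one}) is the right one, but the execution you describe breaks down at the decisive step. First, the parameter bookkeeping does not close: one factor $(q^r;q^d)_k$ is absorbed, together with $[2dk+r]$, into the very-well-poised part ($a=q^r$ and the $q^d\sqrt{a},-q^d\sqrt{a}$ columns), so only $d-1$ numerator slots $b_1,c_1,\dots,b_m,c_m,q^{-dN}$ with $m=(d-2)/2$ remain; you cannot fill them with $s=(d-2)/2$ shifted pairs \emph{and} an untouched pair $(q^r;q^d)_k^2$, and with several distinct shifts $\alpha_i$ you would still have to verify that $a^mq^{d(m+N)}/(b_1c_1\cdots b_mc_m)$ reproduces the power $q^{d(d-r-2)k/2}$, which you never check. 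The actual proof shifts exactly one pair, taking $b_i=c_i=q^r$ for $i<m$, $b_m=q^r$, $c_m=q^{r+(d-1)n}$ and $q^{-dN}=q^{r-(d-1)n}$ with $N=((d-1)n-r)/d\leqslant n-1$ (this is where $n\geqslant d-r$ enters), and then the exponent matches automatically.

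More seriously, your source of the divisibility is wrong. After the transformation the shifted parameters contribute numerator factors of the form $1-q^{r\pm\alpha_i n+jd}$, which are generically \emph{not} divisible by $\Phi_n(q)$ (the congruence \eqref{eq:mod-square} only says the \emph{pairs} are congruent to $(q^r;q^d)^2$ modulo $\Phi_n(q)^2$); in particular the term of the multi-sum with all indices $l_i=0$ equals $1$, so the claim that every summand carries two factors $1-q^{\alpha_i n}$ cannot hold. In the correct argument the two factors of $\Phi_n(q)$ sit in the closed-form prefactor $(aq,aq/b_mc_m;q^d)_N/(aq/b_m,aq/c_m;q^d)_N$ of \eqref{andrews}: $(q^{d+r};q^d)_N$ contains $1-q^{(d-1)n}$ and $(q^{d-r-(d-1)n};q^d)_N$ contains $1-q^{(2-d)n}$ (this is where $n\geqslant r$ is used), and what remains is only to show that the multiple sum, in reduced form, has no $\Phi_n(q)$ in its denominator. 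Your use of Lemma~\ref{lem:one} misstates this last point: factors $1-q^{kn}$ inside the denominator product $(q^{2r};q^d)_{l_1+\cdots+l_{m-1}}$ (which arises as $b_mc_mq^{-dN}/a$) are \emph{not} precluded --- for instance with $d=6$, $r=1$, $n=11$ the term $l_1=8$ contains $1-q^{44}$ in its denominator --- and the lemma's bound $k\geqslant a(d-4)/2$ (with $n=ad-r$) is needed precisely to guarantee that whenever such a factor occurs, some $l_i\geqslant a$, so that the numerator factor $(q^{d-r};q^d)_{l_i}$ supplies a compensating $1-q^n$; the boundary case $n=d-r$ requires a separate (easy) argument. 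As written, your third step therefore has no valid justification, even though the tools you name are the right ones.
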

\begin{proof}The $d=4$ case is just Theorem \ref{thm:d=4}.
We now suppose that $d\geqslant 6$. The proof of this case is
intrinsically the same as that of Theorem \ref{thm:d=4}.
Here we need to use a complicated transformation formula due to
Andrews \cite[Theorem~4]{Andrews75}:
\begin{align}
&\sum_{k\geqslant 0}\frac{(a,q\sqrt{a},-q\sqrt{a},b_1,c_1,\dots,b_m,c_m,q^{-N};q)_k}
{(q,\sqrt{a},-\sqrt{a},aq/b_1,aq/c_1,\dots,aq/b_m,aq/c_m,aq^{N+1};q)_k}
\left(\frac{a^mq^{m+N}}{b_1c_1\cdots b_mc_m}\right)^k \notag\\[5pt]
&\quad=\frac{(aq,aq/b_mc_m;q)_N}{(aq/b_m,aq/c_m;q)_N}
\sum_{l_1,\dots,l_{m-1}\geqslant 0}
\frac{(aq/b_1c_1;q)_{l_1}\cdots(aq/b_{m-1}c_{m-1};q)_{l_{m-1}}}
{(q;q)_{l_1}\cdots(q;q)_{l_{m-1}}} \notag\\[5pt]
&\quad\quad\times\frac{(b_2,c_2;q)_{l_1}\dots(b_m,c_m;q)_{l_1+\dots+l_{m-1}}}
{(aq/b_1,aq/c_1;q)_{l_1}
\dots(aq/b_{m-1},aq/c_{m-1};q)_{l_1+\dots+l_{m-1}}} \notag\\[5pt]
&\quad\quad\times\frac{(q^{-N};q)_{l_1+\dots+l_{m-1}}}
{(b_mc_mq^{-N}/a;q)_{l_1+\dots+l_{m-1}}}
\frac{(aq)^{l_{m-2}+\dots+(m-2)l_1} q^{l_1+\dots+l_{m-1}}}
{(b_2c_2)^{l_1}\cdots(b_{m-1}c_{m-1})^{l_1+\dots+l_{m-2}}},  \label{andrews}
\end{align}
which is a multiseries generalization of Watson's $_8\phi_7$ transformation
formula (see \cite[Appendix (III.18)]{GR}):
\begin{align}
& _{8}\phi_{7}\!\left[\begin{array}{cccccccc}
a,& qa^{\frac{1}{2}},& -qa^{\frac{1}{2}}, & b,    & c,    & d,    & e,    & q^{-n} \\
  & a^{\frac{1}{2}}, & -a^{\frac{1}{2}},  & aq/b, & aq/c, & aq/d, & aq/e, & aq^{n+1}
\end{array};q,\, \frac{a^2q^{n+2}}{bcde}
\right] \notag\\[5pt]
&\quad =\frac{(aq, aq/de;q)_n}
{(aq/d, aq/e;q)_n}
\,{}_{4}\phi_{3}\!\left[\begin{array}{c}
aq/bc,\ d,\ e,\ q^{-n} \\
aq/b,\, aq/c,\, deq^{-n}/a
\end{array};q,\, q
\right].  \label{eq:8phi7}
\end{align}

It is easy to see that $(d-1)n\equiv r\pmod{d}$. Hence,
by \eqref{eq:mod-square}, modulo $\Phi_n(q)^2$, the left-hand side of
\eqref{eq:new-d} is congruent to
\begin{align*}
\sum_{k=0}^{(dn-n-r)/d}[r]\frac{(q^r,q^d\sqrt{q^r},-q^d\sqrt{q^r},
\overbrace{q^r,\ldots,q^r}^{\text{$(d-3)$'s $q^r$}},q^{r+(d-1)n},q^{r-(d-1)n};q^d)_k}
{(q^d,\sqrt{q^r},-\sqrt{q^r},q^d,\ldots,q^d,q^{d-(d-1)n},q^{d+(d-1)n};q^d)_k}
q^{\frac{d(d-r-2)k}{2}},
\end{align*}
where we have used the fact $(dn-n-r)/d\leqslant n-1$ by the condition
$n\geqslant d-r$. Furthermore, by the $q\mapsto q^d$, $a\mapsto q^r$,
$b_i\mapsto q^r$, $c_i\mapsto q^r$, for $1\leqslant i\leqslant m-1$,
$b_m\mapsto q^r$, $c_m\mapsto q^{r+(d-1)n}$, $N\mapsto ((d-1)n-r)/d$
case of Andrews' transformation \eqref{andrews},
the above summation can be written as
\begin{align}
&[r]\frac{(q^{d+r},q^{d+n-dn-r};q^d)_{(dn-n-r)/d}}{(q^d,q^{d+n-dn};q^d)_{(dn-n-r)/d}}
\sum_{l_1,\dots,l_{m-1}\geqslant 0}
\frac{(q^{d-r};q^d)_{l_1}\cdots(q^{d-r};q^d)_{l_{m-1}}}
{(q^d;q^d)_{l_1}\cdots(q^d;q^d)_{l_{m-1}}} \notag\\[5pt]
&\quad\quad\times\frac{(q^r,q^r;q^d)_{l_1}\dots
(q^r,q^{r+(d-1)n};q^d)_{l_1+\dots+l_{m-1}}}
{(q^d,q^d;q^d)_{l_1}
\dots(q^d,q^d;q^d)_{l_1+\dots+l_{m-1}}} \notag\\[5pt]
&\quad\quad\times\frac{(q^{r-(d-1)n};q^d)_{l_1+\dots+l_{m-1}}}
{(q^{2r};q^d)_{l_1+\dots+l_{m-1}}}
\frac{q^{(d+r)(l_{m-2}+\dots+(m-2)l_1)} q^{d(l_1+\dots+l_{m-1})}}
{q^{2rl_1}\cdots q^{2r(l_1+\dots+l_{m-2})}},  \label{eq:multi}
\end{align}
where $m=(d-2)/2$.

It is easy to see that $(q^{d+r};q^d)_{(dn-n-r)/d}$ contains the
factor $1-q^{(d-1)n}$. Similarly, $(q^{d+n-dn-r};q^d)_{(dn-n-r)/d}$
contains the factor $1-q^{(2-d)n}$ since $(dn-n-r)/d\geqslant (n+r)/d$
by the conditions $d\geqslant 6$ and $n\geqslant r$.
Thus, the expression $(q^{d+r},q^{d+n-dn-r};q^d)_{(dn-n-r)/d}$
in the fraction before the multiple summation  is divisible by $\Phi_n(q)^2$.

Note that the non-zero terms in the multiple summation of
\eqref{eq:multi} are just those indexed by $(l_1,\ldots,l_{m-1})$
with $l_1+\dots+l_{m-1}\leqslant (dn-n-r)/d\leqslant n-1$ because of
the factor $(q^{r-(d-1)n};q^d)_{l_1+\dots+l_{m-1}}$ in the
numerator. This immediately implies that all the other
$q$-factorials in the denominator of the multiple summation of
\eqref{eq:multi} do not contain factors of the form $1-q^{\alpha n}$
(and are therefore relatively prime to $\Phi_n(q)$), except for
$(q^{2r};q^d)_{l_1+\dots+l_{m-1}}$. If $n=d-r$, then it is clear
that at least one $(q^{d-r};q^d)_{l_i}$ contains the factor $1-q^n$ for $l_1+\dots+l_{m-1}>0$.
We now assume that $n\geqslant 2d-r$ and so $n>\max\{d,r\}$ in this case.
Thus, if $(q^{2r};q^d)_{l_1+\dots+l_{m-1}}$ has a factor $1-q^{kn}$,
then the number $k$ is unique since $l_1+\dots+l_{m-1}\leqslant n-1$
and $\gcd(n,d)=1$. Moreover, if such a $k$ exists, then we must have
$k\geqslant a(d-4)/2$ by Lemma \ref{lem:one}, where $n=ad-r$. It
follows that $l_1+\dots+l_{m-1}\geqslant k$ and at least one $l_i$ is greater
than or equal to $k/(m-1)=2k/(d-4)\geqslant a$ and so $(q^{d-r};q^d)_{l_i}$ contains the
factor $1-q^n$ in this case. This proves that the denominator of the
reduced form of the multiple summation of \eqref{eq:multi} is always
relatively prime to $\Phi_n(q)$, which completes the proof of
\eqref{eq:new-d}.
\end{proof}

%%%%%%%%%%%%%%%%%%%%%%%%%%%%%%%%%%%%%%%%%%%%%%%%%%%%%%%%%%%%%%%%%%%%%%%%%%%%%%%%%%%%%%%%%%%%%%%%%%%%%%%%%%%%%%%%%%%%%%%%%%%%%%%%%%%%%%%%%%%%%%%%%%%%%%%%%%%%%%%%%%%%%%%%%%%%%%%%%%%%%%%%%%%%%%%%%%%%%%%%%%%%%%%%%%%%%%%%%%%%%%%%%%%%%
\section{Proof of Theorem \ref{thm:d=n+1}}\label{sec:thm4}
We shall prove
\begin{equation*}
\sum_{k=0}^{n-1}[2dk+1]\frac{(q;q^d)_k^d}{(q^d;q^d)_k^d}q^{\frac{d(d-3)k}{2}}
\equiv 0\pmod{\Phi_{dn-n}(q)},  %\label{eq:2nk-2}
\end{equation*}
which is equivalent to
\begin{equation}
\sum_{k=0}^{(dn-n-1)/d}[2dk+1]\frac{(q;q^d)_k^d}{(q^d;q^d)_k^d}q^{\frac{d(d-3)k}{2}}
\equiv 0\pmod{\Phi_{dn-n}(q)},  \label{eq:2nk-3}
\end{equation}
because $(q;q^d)_k$ has the factor $1-q^{dn-n}$ and is therefore divisible
by $\Phi_{dn-n}(q)$
for $(dn-n-1)/d<k\leqslant n-1$, while $(q^d;q^d)_k$ is coprime with
$\Phi_{dn-n}(q)$ for these $k$.

Since $q^{dn-n}\equiv 1\pmod{\Phi_{dn-n}(q)}$, we have
\begin{align}\label{aqcong}
\frac{(q;q^d)_{(dn-n-1)/d} }{(q^d;q^d)_{(dn-n-1)/d}}
&=\frac{(1-q)(1-q^{d+1})\cdots (1-q^{dn-n-d})}
{(1-q^{d})(1-q^{2d})\cdots (1-q^{dn-n-1})} \notag\\[5pt]
&\equiv \frac{(1-q)(1-q^{d+1})\cdots (1-q^{dn-n-d})}
{(1-q^{-(dn-n-d)})(1-q^{-(dn-n-2d)})\cdots (1-q^{-1})}\notag\\[5pt]
&=(-1)^{\frac{dn-n-1}{d}}q^{\frac{(d-1)(n-1)(dn-n-1)}{2d}} \pmod{\Phi_{dn-n}(q)}.
\end{align}
Furthermore, for $0\leqslant k\leqslant (dn-n-1)/d$, we have
\begin{align*}
&\frac{(q;q^d)_{(dn-n-1)/d-k} }{(q^d;q^d)_{(dn-n-1)/d-k}}\\[5pt]
&\quad=\frac{(q;q^d)_{(dn-n-1)/d} }{(q^d;q^d)_{(dn-n-1)/d}}
\frac{(1-q^{dn-n-1-(k-1)d})(1-q^{dn-n-1-(k-2)d})\cdots (1-q^{dn-n-1})}
{(1-q^{dn-n-kd})(1-q^{dn-n-(k-1)d})\cdots (1-q^{dn-n-d})}
\\[5pt]
&\quad\equiv (-1)^{\frac{dn-n-1}{d}}q^{\frac{(d-1)(n-1)(dn-n-1)}{2d}}
\frac{(1-q^{-1-(k-1)d})(1-q^{-1-(k-2)d})\cdots (1-q^{-1})}
{(1-q^{-kd})(1-q^{-(k-1)d})\cdots (1-q^{-d})} \\[5pt]
&\quad=(-1)^{\frac{dn-n-1}{d}}q^{\frac{(d-1)(n-1)(dn-n-1)}{2d}+(d-1)k}
\frac{(q;q^d)_{k} }{(q^d;q^d)_{k}} \pmod{\Phi_{dn-n}(q)}.
\end{align*}
Taking  the most left- and right-hand sides of this congruence
to the power $d$, it follows, using
$q^{dn-n}\equiv 1\pmod{\Phi_{dn-n}(q)}$,
that for $0\leqslant k\leqslant (dn-n-1)/d$ there holds
\begin{align*}
&[2d((dn-n-1)/d-k)+1]\frac{(q;q^d)_{(dn-n-1)/d-k}^d}
{(q^d;q^d)_{(dn-n-1)/d-k}^d}q^{\frac{d(d-3)((dn-n-1)/d-k)}{2}}\\[5pt]
&\quad \equiv (-1)^{dn-n}q^{\frac{(dn-n)(dn-n-3)}{2}} [2dk+1]
\frac{(q;q^d)_k^d}{(q^d;q^d)_k^d}q^{\frac{d(d-3)k}{2}} \pmod{\Phi_{dn-n}(q)}.
\end{align*}
It is easy to check that
$(-1)^{dn-n}q^{\frac{(dn-n)(dn-n-3)}{2}}\equiv -1\pmod{\Phi_{dn-n}(q)}$
whenever $dn-n$ is odd or even.
This proves that the $k$-th and $((dn-n-1)/d-k)$-th terms of the
left-hand side of \eqref{eq:2nk-3} cancel each other modulo $\Phi_{dn-n}(q)$
and therefore \eqref{eq:2nk-3} holds. Equivalently, \eqref{eq:2nk} holds
modulo $\Phi_{dn-n}(q)$. Moreover, by \eqref{odd-1} and \eqref{eq:new-1},
one sees that \eqref{eq:2nk} also holds modulo $\Phi_{n}(q)^2$ for
$d\geqslant 4$. The proof then follows from the fact $\Phi_{n}(q)^2$ and
$\Phi_{dn-n}(q)$ are relatively prime polynomials.

%%%%%%%%%%%%%%%%%%%%%%%%%%%%%%%%%%%%%%%%%%%%%%%%%%%%%%%%%%%%%%%%%%%%%%%%%%%%%%%%%%%%%%%%%%%%%%%%%%%%%%%%%%%%%%%%%%%%%%%%%%%%%%%%%%%%%%%%%%%%%%%%%%%%%%%%%%%%%%%%%%%%%%%%%%%%%%%%%%%%%%%%%%%%%%%%%%%%%%%%%%%%%%%%%%%%%%%%%%%%%%%%%%%%%%
\section{Concluding remarks and open problems}\label{sec:final}

Having establishing ($q$-)congruences for truncated (basic) hypergeometric
series, one can wonder what their `archimedian' analogues are, i.e.\ whether
the infinite sums from $k=0$ to $\infty$ have known evaluations,
just as \eqref{eq:4ram} is such an archimedian analogue for \eqref{eq:pram}.

In many cases of our results,
especially when dealing with arbitrary exponents $d$,
we are not aware of explicit evaluations in the archimedian case.
However for small $d$ we can easily find corresponding evaluations
by suitably specializing known summations for (basic) hypergeometric series,
such as Rogers' nonterminating $_6\phi_5$ summation (cf.\
\cite[Appendix~(II.20)]{GR}),
\begin{equation}\label{eq:6phi5}
{}_{6}\phi_5\!\left[\begin{array}{cccccc}
a,& qa^{\frac{1}{2}},& -qa^{\frac{1}{2}}, & b,    & c,    & d \\
  & a^{\frac{1}{2}}, & -a^{\frac{1}{2}},  & aq/b, & aq/c, & aq/d
\end{array};q,\, \frac{aq}{bcd}
\right]
 =\frac{(aq, aq/bc,aq/bd,aq/cd;q)_\infty}
{(aq/b,aq/c,aq/d,aq/bcd;q)_\infty},
\end{equation}
where $|aq/bcd|<1$ for convergence.

Indeed, by replacing $q$ by $q^4$, and letting $a=b=c=d=q^r$,
we obtain from \eqref{eq:6phi5}, after multiplying both sides
by $[r]$, the following identity:
\begin{align}\label{eq:rdid}
\sum_{k\ge 0}[8k+r]\frac{(q^r;q^4)_k^4}{(q^4;q^4)_k^4}q^{(4-2r)k}
&=[r]\frac{(q^{4+r},q^{4-r},q^{4-r},q^{4-r};q^4)_\infty}
{(q^4,q^4,q^4,q^{4-2r};q^4)_\infty}\notag\\
&=\frac{[r]\,\Gamma_{q^4}(1-\frac r2)}
{\Gamma_{q^4}(1+\frac r4)\Gamma_{q^4}(1-\frac r4)^3},
\end{align}
valid for $r<2$.
In the last equation we have rewritten the product using
the $q$-Gamma function
\begin{equation*}
\Gamma_q(x)=\frac{(q;q)_\infty}{(q^x;q)_\infty}(1-q)^{1-x},
\end{equation*}
defined for $0<q<1$ (cf.\ \cite[Section~1.10]{GR}).
For $r\leqslant1$ being an odd integer, we have thus just
established an archimedian analogue of Theorem~\ref{thm:d=4}.

Now, since $\lim_{q\to 1^-}\Gamma_q(x)=\Gamma(x)$, we obtain that in the
$q\to 1^-$ limit \eqref{eq:rdid} becomes
\begin{align}\label{eq:rdidqto1}
\sum_{k\ge 0}(8k+r)\frac{(\frac r4)_k^4}{k!^4}
=\frac{r\,\Gamma(1-\frac r2)}
{\Gamma(1+\frac r4)\Gamma(1-\frac r4)^3}
=\frac{4\,\sin(\frac{r\pi}4)\,\Gamma(1-\frac r2)}
{\pi\Gamma(1-\frac r4)^2},
\end{align}
where we have used the well-known reflection formula for the Gamma function.
It is now immediate that for $r=1$ we get \eqref{eq:4ram}
while for $r=-1$ we get the similarly attractive evaluation
\begin{align}
\sum_{k=0}^\infty(8k-1)\frac{(\frac{-1}{4})_k^4}{k!^4}
=\frac{-16\sqrt{2}}{\sqrt{\pi}\,\Gamma(\frac 14)^2}. \label{eq:4ram-1}
\end{align}

Many other identities involving $\pi$ can similarly be obtained.
At this place, in passing, we would like to point out that
by replacing $q$ by $q^2$ in \eqref{eq:6phi5} and putting $a=q^2$,
$b=c=d=q$ one readily obtains
\begin{align}
\sum_{k\ge 0}\frac{1+q^{2k+1}}{1+q}\frac{(1-q)^2}{(1-q^{2k+1})^2}q^k
=\frac{(q^4,q^2,q^2,q^2;q^2)_\infty}{(q^3,q^3,q^3,q;q^2)_\infty},
\end{align}
which, as recently noted by Sun~\cite[Equation~(1.3)]{Sun2}
(who derived this identity by completely different means)
is easily seen to be a $q$-analogue of Euler's identity
$$
\sum_{k\ge 0}\frac 1{(2k+1)^2}=\frac{\pi^2}8,
$$
used to prove his famous evaluation $\zeta(2)=\pi^2/6$.
See \cite{Wei} for recent new samples of expansions
involving $\pi$, obtained by suitably specializing $q$-series
identities.

\smallskip
We turn to discussing whether some of the results obtained in the paper
can be further strengthened.
We have proved Theorems \ref{thm:2} and \ref{thm:3} by establishing a
common generalization of them, namely Theorem~\ref{thm:5}.
However, we are unable to
prove a similar common generalization of \eqref{odd-1} and \eqref{odd-2}.
Numerical calculation for $q=1$ suggests that
there are no congruences for the left-hand side of \eqref{eq:new-d}
with \textit{odd} $d\geqslant 5$ that would hold in general (in particular,
the case $d=5$ and $r=3$ appears to be such a counterexample).

Nevertheless, we would like to give the following result being similar
to Theorem~\ref{thm:5}.
\begin{theorem}\label{thm:6}
Let $d\geqslant 5$ be an odd integer and let $r$ be an even integer with
$\gcd(d,r)=1$. Let $n>1$ be an odd integer with $n\equiv -r\pmod{d}$
and $n\geqslant \max\{r,d-r\}$. Then
\begin{equation}
\sum_{k=0}^{n-1}[2dk+r]_{q^2}\frac{(q^{2r};q^{2d})_k^d}{(q^{2d};q^{2d})_k^d}q^{d(d-r-2)k}
\equiv 0\pmod{\Phi_n(q)^2}.  \label{eq:new-odd}
\end{equation}
\end{theorem}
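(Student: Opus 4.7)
The strategy is to emulate the proof of Theorem~\ref{thm:5}, working with the base $q^{2d}$ and the very-well-poised parameter $a=q^{2r}$ in place of $q^d$ and $q^r$.

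First, since $\Phi_n(q)\mid 1-q^{\alpha n}$ for any integer $\alpha$, the same algebraic identity used to derive~\eqref{eq:mod-square} gives, modulo $\Phi_n(q)^2$,
\[
(q^{2r-2(d-1)n},q^{2r+2(d-1)n};q^{2d})_k\equiv(q^{2r};q^{2d})_k^2.
\]
Set $N=((d-1)n-r)/d$; the hypotheses $n\equiv-r\pmod d$ together with $d$ odd, $n$ odd, $r$ even guarantee that $N$ is a positive integer, and the bound $n\geqslant d-r$ forces $N\leqslant n-1$. Applying the above congruence to replace two copies of $(q^{2r};q^{2d})_k$ in the numerator of the summand introduces a factor $(q^{-2dN};q^{2d})_k$ that vanishes for $k>N$, truncating the sum to the range $0\leqslant k\leqslant N$.

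Next, the truncated sum (which equals the original sum modulo $\Phi_n(q)^2$) is a terminating very-well-poised basic hypergeometric series in base $q^{2d}$. I would apply an appropriate multiseries transformation in the spirit of Andrews'~\eqref{andrews} to rewrite it as an explicit prefactor times a multisum. For the prefactor, following the argument in Theorem~\ref{thm:5}, two of its Pochhammer-symbol factors in the numerator contribute the vanishing factors $1-q^{2(d-1)n}$ and $1-q^{-2(d-2)n}$ respectively (the second requiring $n\geqslant r$), so the prefactor is divisible by $\Phi_n(q)^2$; the denominator Pochhammers are coprime to $\Phi_n(q)$ since $\gcd(d,n)=1$ (inherited from $\gcd(d,r)=1$ and $n\equiv-r\pmod d$) and $N<n$.

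For the denominators of the multisum, the key factor to analyze is $(q^{4r};q^{2d})_{l_1+\cdots+l_{m-1}}$, arising from the $b_mc_mq^{-N}/a$ parameter in Andrews' transformation. Since $n$ is odd, $\Phi_n(q)\mid 1-q^{4r+2dk}$ is equivalent to $n\mid 2r+dk$, so Lemma~\ref{lem:one} applies and yields $k\geqslant a(d-4)/2$, where $n=ad-r$. It follows that $l_1+\cdots+l_{m-1}\geqslant k$ and therefore at least one $l_i\geqslant a$; the corresponding numerator factor $(q^{2d-2r};q^{2d})_{l_i}$ then contains $1-q^n$, cancelling the offending factor from the denominator.

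The main obstacle is that the natural parameter count in Andrews' transformation would fix $m=(d-2)/2$, which is a half-integer for odd $d$. One must therefore set up the transformation with a different choice of $m$ (most plausibly $m=(d-1)/2$, which is integral for odd $d$) and a correspondingly adjusted pairing of parameters via the congruence --- verifying that the exponent $q^{d(d-r-2)k}$ and the other arguments are produced correctly is the technical crux; once that is established, the remaining estimates mirror those of Theorem~\ref{thm:5}.
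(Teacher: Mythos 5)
Your overall strategy is indeed the one the paper follows (the paper only sketches it: apply \eqref{eq:mod-square} in base $q^{2d}$ with $a=q^{2r}$, truncate at $N=((d-1)n-r)/d$, then invoke Andrews' transformation \eqref{andrews} and analyse the prefactor and the multisum). However, you stop exactly at the point that constitutes the actual content of the proof: you never resolve how to instantiate \eqref{andrews} when $d$ is odd. The paper's choice is $m=(d-1)/2$, $q\mapsto q^{2d}$, $a=q^{2r}$, $b_1=q^{d+r}$, $b_2=\cdots=b_m=c_1=\cdots=c_{m-1}=q^{2r}$, $c_m=q^{2r+2(d-1)n}$, $N=((d-1)n-r)/d$. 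The crucial trick you are missing is the choice $b_1=q^{d+r}=(aq^{2d})^{1/2}$: then $aq^{2d}/b_1=b_1$, so the numerator factor $(b_1;q^{2d})_k$ cancels the denominator factor $(aq^{2d}/b_1;q^{2d})_k$ on the left-hand side, which repairs the parity mismatch in the parameter count and leaves exactly $d$ copies of $(q^{2r};q^{2d})_k$ over $(q^{2d};q^{2d})_k^d$ (after \eqref{eq:mod-square} is used on the pair $c_m$, $q^{-2dN}$); one then checks that the argument $\bigl(a^mq^{2d(m+N)}/b_1c_1\cdots b_mc_m\bigr)^k$ equals $q^{d(d-r-2)k}$ precisely because $2m=d-1$. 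Merely saying ``most plausibly $m=(d-1)/2$ with a correspondingly adjusted pairing'' leaves the technical crux, which you yourself flag, unestablished; as it stands this is a proof outline, not a proof.

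A second, quantitative gap: your cancellation argument for the multisum is transplanted verbatim from the even case and does not survive the change $m=(d-2)/2\to(d-1)/2$. With $m-1=(d-3)/2$ summation indices, Lemma~\ref{lem:one} together with $l_1+\cdots+l_{m-1}\geqslant k\geqslant a(d-4)/2$ only gives some $l_i\geqslant 2k/(d-3)$, which can be smaller than $a$; in the extreme case $d=5$ there is a single index $l_1$ and you only get $l_1\geqslant a/2$, so your claimed factor $1-q^n$ from $(q^{2d-2r};q^{2d})_{l_i}$ (which requires $l_i\geqslant a$, and which for $d=5$ does not even occur as a factor indexed by a single $l_i$) is not available. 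With the paper's choice $b_1=q^{d+r}$ the first multisum numerator factor is instead $(aq^{2d}/b_1c_1;q^{2d})_{l_1}=(q^{d-r};q^{2d})_{l_1}$, which already contains a factor divisible by $\Phi_n(q)$ once $l_1\geqslant (a+1)/2$ (note $a$ is odd because $n$ is odd, $d$ is odd and $r$ is even), and this weaker threshold is what makes the cancellation go through, including for $d=5$. One must also check the new denominator entries $(q^{d+r};q^{2d})_{l_1+\cdots+l_i}$ created by $b_1$ are coprime to $\Phi_n(q)$, an issue that does not arise in your even-case template. So both the setup of the transformation and the divisibility bookkeeping need to be redone for odd $d$, and neither is carried out in your proposal.
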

The proof of Theorem \ref{thm:6} is similar to that of Theorem \ref{thm:5}.
In this case we need to apply Andrews' transformation \eqref{andrews} with
$m=(d-1)/2$,   $q\to q^{2d}$, $a=q^{2r}$, $b_1=q^{d+r}$, $b_2=\ldots=b_{m}=q^{2r}$,
$c_1=\cdots=c_{m-1}=q^{2r}$, $c_m={q^{2r+2(d-1)n}}$ and $N=((d-1)n-r)/d$.
The details of the proof are omitted here.

We can also prove the following refinement of \eqref{odd-2} and
\eqref{eq:new-2}. However, we are unable to deduce any interesting
conclusion similar to \eqref{eq:2p2k} from this result by letting $q\to 1$.
\begin{theorem}\label{thm:7}
Let $d\geqslant 3$ be an integer and let $n>1$ be an  integer
with $n\equiv 1\pmod{d}$. Then
\begin{equation*}
\sum_{k=0}^{n-1}[2dk-1]\frac{(q^{-1};q^d)_k^d}{(q^d;q^d)_k^d}q^{\frac{d(d-1)k}{2}}
\equiv 0\pmod{\Phi_n(q)^2\Phi_{dn-n}(q)}.
\end{equation*}
\end{theorem}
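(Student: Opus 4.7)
The plan is to parallel the proof of Theorem~\ref{thm:d=n+1}: establish the congruence separately modulo $\Phi_n(q)^2$ and modulo $\Phi_{dn-n}(q)$, and then combine via coprimality. The $\Phi_n(q)^2$ part is immediate: for odd $d\geqslant 3$ it is the first case of \eqref{odd-2}, while for even $d\geqslant 4$ it is precisely \eqref{eq:new-2} of Theorem~\ref{thm:3}. Since $dn-n=n(d-1)>n$ when $d\geqslant 3$, the polynomials $\Phi_n(q)$ and $\Phi_{dn-n}(q)$ are distinct cyclotomic polynomials and hence coprime, so all that remains is to prove the congruence modulo $\Phi_{dn-n}(q)$.

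Write $n=d\ell+1$ with $\ell\geqslant 1$, set $m=dn-n$, and let $N=(d-1)\ell+1=(m+1)/d$, so $dN\equiv 1\pmod{m}$. Since the smallest positive $j$ with $jd\equiv 1\pmod{m}$ is exactly $j=N$, the factor $1-q^{jd-1}$ in $(q^{-1};q^d)_k$ first becomes divisible by $\Phi_m(q)$ at $j=N$, i.e.\ at $k\geqslant N+1$; meanwhile $(q^d;q^d)_k$ is coprime to $\Phi_m(q)$ for $k\leqslant n-1$. Thus modulo $\Phi_m(q)$ the sum truncates to $\sum_{k=0}^{N}f(k)$, where
\begin{equation*}
f(k)=[2dk-1]\,\frac{(q^{-1};q^d)_k^d}{(q^d;q^d)_k^d}\,q^{d(d-1)k/2}.
\end{equation*}
The strategy is to establish the anti-symmetry $f(N-k)\equiv -f(k)\pmod{\Phi_m(q)}$ for $0\leqslant k\leqslant N$; then $\sum_k f(k)=\sum_k f(N-k)\equiv -\sum_k f(k)$ yields $2\sum_k f(k)\equiv 0$, and since $\mathbb{Z}[q]/\Phi_m(q)\cong\mathbb{Z}[\zeta_m]$ is a domain in which $2$ is not a zero divisor, the truncated sum vanishes modulo $\Phi_m(q)$.

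The anti-symmetry comes from combining three computations modulo $\Phi_m(q)$, all driven by $q^m\equiv 1$: the $q$-integer identity $[2d(N-k)-1]\equiv -q^{1-2dk}[2dk-1]$ (obtained from $2dN-1\equiv 1\pmod{m}$); the explicit evaluation
\begin{equation*}
\frac{(q^{-1};q^d)_N}{(q^d;q^d)_N}\equiv (-1)^N q^{-dN(N+1)/2}\pmod{\Phi_m(q)},
\end{equation*}
obtained by pairing each factor $1-q^{jd-1}$ in $(q^{-1};q^d)_N$ with $1-q^{(N-j)d}$ in $(q^d;q^d)_N$ via the identity $(jd-1)+(N-j)d=m$; and the analogous ratio
\begin{equation*}
\frac{(q^{-1};q^d)_{N-k}/(q^d;q^d)_{N-k}}{(q^{-1};q^d)_N/(q^d;q^d)_N}\equiv q^{(d+1)k}\,\frac{(q^{-1};q^d)_k}{(q^d;q^d)_k}\pmod{\Phi_m(q)}.
\end{equation*}
The $k$-dependent exponents of $q$ telescope, leaving $f(N-k)\equiv -C\,f(k)$ with $C=A_N^d\,q^{1+d(d-1)N/2}$ independent of $k$, where $A_N=(q^{-1};q^d)_N/(q^d;q^d)_N$.

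The main technical hurdle is verifying that $C\equiv 1\pmod{\Phi_m(q)}$. Substituting the explicit formula for $A_N$ reduces the exponent of $q$ in $C$ to $-m(m+3)/2$ modulo $m$, which vanishes when $m$ is odd but equals $m/2$ when $m$ is even, contributing a factor $q^{m/2}\equiv -1$. A short parity analysis then balances this against the sign $(-1)^{Nd}$: if $m=n(d-1)$ is odd, then $d$ must be even, making $Nd$ even; if $m$ is even, then the hypothesis $n\equiv 1\pmod{d}$ combined with $m$ even forces $d$ odd (since $d$ even would make $n$ odd and hence $m$ odd), whence $N=(d-1)\ell+1$ is odd and $Nd$ is odd. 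Either way the two sign contributions cancel, giving $C\equiv 1$ and completing the proof of Theorem~\ref{thm:7}.
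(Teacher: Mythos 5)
Your proposal is correct, and it follows essentially the same route the paper intends: Theorem~\ref{thm:7} is stated without proof, but the argument is exactly the Section~\ref{sec:thm4} method for Theorem~\ref{thm:d=n+1} adapted to the $-1$ case, i.e.\ truncation of the sum modulo $\Phi_{dn-n}(q)$, an anti-symmetry pairing $k\leftrightarrow (dn-n+1)/d-k$ of the summands, the modulo $\Phi_n(q)^2$ part taken from \eqref{odd-2} and \eqref{eq:new-2}, and coprimality of $\Phi_n(q)^2$ and $\Phi_{dn-n}(q)$. Your exponent and sign bookkeeping (including the $q^{m/2}\equiv -1$ contribution when $m=dn-n$ is even and its cancellation against $(-1)^{Nd}$) checks out.
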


We would like to propose the following three conjectures
which are similar to Corollary~\ref{cor:five}.
\begin{conjecture}\label{c1}
Let $r$ be a positive integer and let $p$ be a
prime with $p>2r+1$. Then
\begin{align*}
\sum_{k=0}^{p-1}k^r \left(k+\frac{1}{p+1}\right)^r
\frac{(\frac{1}{p+1})_k^{p+1}}{k!^{p+1}}\equiv 0\pmod{p^4}.
\end{align*}
\end{conjecture}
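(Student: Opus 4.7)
The plan is to mimic the derivation of Corollary~\ref{cor:five} from Theorem~\ref{thm:d=n+1} and Sun's congruence~\eqref{eq:sun}: first establish suitable $q$-congruences, then specialize to $q=1$ with $n=p$ and $d=p+1$. The key algebraic observation is the identity
$$4(p+1)^2\,k\bigl(k+\tfrac{1}{p+1}\bigr)=(2(p+1)k+1)^2-1,$$
whose $r$-th power expands binomially as
$$\bigl(4(p+1)^2\bigr)^r k^r \bigl(k+\tfrac{1}{p+1}\bigr)^r=\sum_{j=0}^r(-1)^{r-j}\binom{r}{j}(2(p+1)k+1)^{2j}.$$
Writing $c_k=(\tfrac{1}{p+1})_k^{p+1}/k!^{p+1}$, noting that $4(p+1)^2$ is a unit modulo $p$, and that $\sum_{k=0}^{p-1}c_k\equiv 0\pmod{p^5}$ by~\eqref{eq:sun}, Conjecture~\ref{c1} would follow from the family of vanishings
$$V_j:=\sum_{k=0}^{p-1}(2(p+1)k+1)^{2j}c_k\equiv 0\pmod{p^4}\quad\text{for }j=1,\ldots,r.$$

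To prove $V_j\equiv 0\pmod{p^4}$, I would look for a $q$-analogue in the spirit of Theorem~\ref{thm:d=n+1}, namely a $q$-congruence of the form
$$\sum_{k=0}^{n-1}[2dk+1]^{2j}\frac{(q;q^d)_k^d}{(q^d;q^d)_k^d}\,q^{d(d-3)k/2}\equiv 0\pmod{\Phi_n(q)^a\Phi_{dn-n}(q)^b},$$
for $d\geqslant 4$ even, $n\equiv-1\pmod d$, and suitable $a,b$ with $a+b\geqslant 4$. Since $\Phi_p(1)=\Phi_{p^2}(1)=p$, specializing to $n=p$, $d=p+1$ and letting $q\to 1^-$ would then force divisibility of $V_j$ by $p^{a+b}\geqslant p^4$. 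The attack on this $q$-congruence should combine Andrews' multiseries transformation~\eqref{andrews}, used as in the proof of Theorem~\ref{thm:5} to furnish two factors of $\Phi_n(q)$, with a refinement of the antisymmetry argument of Section~\ref{sec:thm4} to extract additional cyclotomic factors of $\Phi_{dn-n}(q)$.

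The main obstacle lies precisely in this last step. The pairing argument in Section~\ref{sec:thm4} rests on the sign change $[2d(K-k)+1]\equiv-[2dk+1]\pmod{\Phi_{dn-n}(q)}$ under the reflection $k\mapsto K-k$, where $K=(dn-n-1)/d$; upon squaring, this sign is lost, so the direct pairing produces no cancellation modulo $\Phi_{dn-n}(q)$. Overcoming this will likely require either a higher-order Taylor expansion of the summand around a primitive $(dn-n)$-th root of unity, or a more elaborate specialization of Andrews' transformation that already encodes the required higher cyclotomic divisibility. A possibly more tractable alternative, especially for larger $r$, is to prove each $V_j$ directly by computing the moments $\sum_{k=0}^{p-1}k^m c_k$ modulo suitable powers of $p$ for $m=1,\ldots,2r$, in the spirit of Gao's expansion~\eqref{gao} of $\sum kc_k$ modulo $p^5$; once such moments are known, each $V_j$ can be expanded elementarily and the required vanishing verified termwise.
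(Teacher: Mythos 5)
This statement is Conjecture~\ref{c1} in the paper: the authors state it as an open problem and give no proof, so there is nothing on the paper's side to match your argument against. Your attempt, by your own account, is a reduction plus a research plan rather than a proof, and the gap is exactly where you locate it. The algebraic step is fine: $4(p+1)^2k\bigl(k+\tfrac{1}{p+1}\bigr)=(2(p+1)k+1)^2-1$, so with $c_k=(\tfrac1{p+1})_k^{p+1}/k!^{p+1}$ and Sun's congruence \eqref{eq:sun} handling the $j=0$ term, the conjecture (for all $r'\leqslant r$) is equivalent to the even-moment vanishings $V_j=\sum_{k=0}^{p-1}(2(p+1)k+1)^{2j}c_k\equiv 0\pmod{p^4}$. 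But nothing in the paper, nor in your sketch, proves any $V_j\pmod{p^4}$: the known inputs give only the \emph{first} power, $\sum_k(2(p+1)k+1)c_k\equiv 0\pmod{p^3}$ (from \eqref{eq:2p2k}), i.e.\ one power short and with the wrong (odd) exponent on the linear factor.

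Concretely, the two mechanisms available in the paper cannot be combined as you hope. Andrews' transformation \eqref{andrews}, as used for Theorem~\ref{thm:5}, yields only $\Phi_n(q)^2$, and it does so for the summand containing $[2dk+1]$ to the first power (the factor $[2dk+1]$ is produced by the very structure of the very-well-poised series, via the quotient $(q\sqrt a,-q\sqrt a;q)_k/(\sqrt a,-\sqrt a;q)_k$); it is not clear that any specialization produces $[2dk+1]^{2j}$. The antisymmetry argument of Section~\ref{sec:thm4} supplies the extra factor $\Phi_{dn-n}(q)$ only because $[2d(K-k)+1]\equiv -q^{\ast}[2dk+1]$ under the reflection $k\mapsto K-k$, and, as you note, squaring destroys the sign, so the reflected terms reinforce rather than cancel. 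Thus the proposed $q$-congruence modulo $\Phi_n(q)^a\Phi_{dn-n}(q)^b$ with $a+b\geqslant 4$ is itself conjectural, and the fallback of computing the moments $\sum_k k^m c_k\pmod{p^4}$ for $m\leqslant 2r$ is an open problem at least as hard as the conjecture (the cited results \eqref{gao} and Wang's extension cover only $m=1$). So the proposal contains a correct and potentially useful reduction, but the essential new idea needed to produce four powers of $p$ for even powers of the linear factor is missing, which is presumably why the statement remains a conjecture in the paper.
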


\begin{conjecture}\label{c7}
Let $p>3$ be a prime. Then
\begin{equation}
\sum_{k=0}^{p-1}(2pk+2k+1)\frac{(\frac{1}{p+1})_k^{2p+2}}{k!^{2p+2}}
\equiv 0\pmod{p^7}. \label{eq:p7}
\end{equation}
More generally, if $p>3$ is a prime and $r$ a positive integer, then
\begin{equation}
\sum_{k=0}^{p^r-1}\Big(2k\frac{p^{r+1}-1}{p^r-1}+1\Big)
\frac{\big(\frac{p^r-1}{p^{r+1}-1}\big)_k^{2\frac{p^{r+1}-1}{p-1}}}
{k!^{2\frac{p^{r+1}-1}{p-1}}}
\equiv 0\pmod{p^{2r+5}}.
\end{equation}
\end{conjecture}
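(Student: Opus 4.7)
My plan is to lift Conjecture~\ref{c7} to a $q$-congruence and then specialize $q\to 1$, exactly as Corollary~\ref{cor:five} was obtained from Theorem~\ref{thm:d=n+1}. For the first part, the natural candidate is
\begin{equation*}
S_p(q) := \sum_{k=0}^{p-1}[2(p+1)k+1]\,
\frac{(q;q^{p+1})_k^{2p+2}}{(q^{p+1};q^{p+1})_k^{2p+2}}\,q^{(p+1)(p-2)k},
\end{equation*}
and the target $q$-congruence would be
\begin{equation*}
S_p(q)\equiv 0\pmod{\Phi_p(q)^{5}\,\Phi_{p^2}(q)^{2}},
\end{equation*}
whose $q\to 1$ limit is exactly $p^7$-divisibility, since $\Phi_p(1)=\Phi_{p^2}(1)=p$. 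The exponent $(p+1)(p-2)$ is chosen so that the summand, when $q^{p+1}$ is taken as the base, is very-well-poised, by direct analogy with the choice $d(d-3)/2$ in Theorem~\ref{thm:2}.

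For the factor $\Phi_{p^2}(q)^2$ I would refine the anti-symmetry argument of Section~\ref{sec:thm4}: with $n=p$ and $d=p+1$ the relevant involution is $k\mapsto p-1-k$, and because the summand now carries $2(p+1)$ rather than $p+1$ copies of $(q;q^{p+1})_k/(q^{p+1};q^{p+1})_k$, pairing the $k$-th and $(p-1-k)$-th terms should double the order of vanishing at primitive $p^2$-th roots of unity, producing $\Phi_{p^2}(q)^2$ in place of the single $\Phi_{p^2}(q)$ in Theorem~\ref{thm:d=n+1}. For the factor $\Phi_p(q)^5$, I would apply Andrews' transformation~\eqref{andrews} to $S_p(q)$ with $m=p$ and iterate the congruence~\eqref{eq:mod-square}: each pairing of two numerator $(q;q^{p+1})_k$ factors with two $(q^{1\pm\alpha p};q^{p+1})_k$ factors introduces one extra factor of $\Phi_p(q)^2$, and combining this with the $\Phi_p(q)^2$-divisibility of the leading coefficient after Andrews' transformation (analogous to the $(q^{d+r},q^{d+n-dn-r};q^d)$ prefactor producing $\Phi_n(q)^2$ in the proof of Theorem~\ref{thm:5}) would account for the remaining $\Phi_p(q)$-power.

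The main obstacle is that several such pairings already exhaust many of the available numerator Pochhammer symbols, so the resulting multi-sum must be analysed very delicately to confirm that no spurious cancellation takes place between numerator and denominator; effectively this step requires either a $q$-analog of Sun's deep congruence~\eqref{eq:sun} (which is not known, even modulo $\Phi_p(q)^4$) or a new transformation formula beyond Andrews'. The general $r\ge 2$ case introduces a further difficulty: the quantity $(p^{r+1}-1)/(p^r-1)$ is not an integer, so one cannot use a fixed base $q^d$; a likely workaround is to phrase the $q$-analog directly in terms of $\Phi_{p^r}(q)$ and $\Phi_{p^{r+1}}(q)$ and to invoke a parametric version of Andrews' transformation, but formulating the correct $q$-analog in this setting is already a substantial preliminary problem, and I expect this together with the control of the cancellations above to be the hardest part of the proof.
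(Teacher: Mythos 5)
You have not proved the statement, and in fact it cannot be "compared with the paper's proof" because the paper offers none: \eqref{eq:p7} and its generalization are stated as open conjectures, and the paper only records conjectural \emph{partial} $q$-analogues. Your proposal is likewise a research plan whose essential steps you yourself flag as missing (a $q$-analogue of Sun's congruence \eqref{eq:sun}, or a transformation beyond Andrews'), so there is no complete argument to assess. More importantly, the specific route you outline is in tension with the evidence in the paper. First, your candidate sum uses the exponent $q^{(p+1)(p-2)k}$, whereas the well-poised normalization supported by the paper's numerics is $q^{(p+1)(p-1)k}$. Second, and decisively, the paper's Conjecture containing \eqref{conj2} asserts that this natural $q$-analogue is congruent, modulo $[p]^5\Phi_{p^2}(q)$, to an explicitly \emph{nonzero} multiple of $q(1-q)^2[p]^4\Phi_{p^2}(q)$; thus the truncated sum is (conjecturally) \emph{not} divisible by $\Phi_p(q)^5$, and the extra powers of $p$ in the limit $q\to1$ come from the vanishing factor $(1-q)^2$, not from higher cyclotomic divisibility. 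Indeed the paper notes that even \eqref{conj2} only recovers \eqref{eq:p7} modulo $p^6$, not $p^7$. So your target congruence $S_p(q)\equiv 0\pmod{\Phi_p(q)^5\Phi_{p^2}(q)^2}$ is almost certainly false, and the strategy of deducing the $p^7$ result purely by specializing a cyclotomic divisibility at $q=1$ cannot work in this form.

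The two mechanisms you propose for gaining extra factors also do not do what you claim. The anti-symmetry argument of Section~\ref{sec:thm4} shows that paired terms cancel modulo $\Phi_{dn-n}(q)$, i.e.\ to first order at primitive $(dn-n)$-th roots of unity; doubling the number of Pochhammer quotients in the summand does not automatically make the paired terms agree to second order, so the jump from $\Phi_{p^2}(q)$ to $\Phi_{p^2}(q)^2$ is an unproved (and unmotivated) leap. Similarly, \eqref{eq:mod-square} is a congruence that holds only modulo $\Phi_n(q)^2$: it is used to replace $(q^r;q^d)_k^2$ by $(q^{r-\alpha n},q^{r+\alpha n};q^d)_k$ so that the series terminates and Andrews' transformation \eqref{andrews} applies; it does not "introduce" factors of $\Phi_n(q)^2$ into the sum, and iterating it cannot yield a congruence modulo any power higher than $\Phi_n(q)^2$ — the modulus is capped by the weakest replacement used. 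This is precisely why the proofs of Theorems~\ref{thm:2}--\ref{thm:5} stop at $\Phi_n(q)^2$, and why higher-power results require genuinely different tools. As it stands, your proposal identifies the right circle of ideas (creating a $q$-analogue, Andrews' transformation, the anti-symmetry pairing) but neither closes any of the gaps nor chooses a target $q$-congruence that is consistent with the conjectural picture in the paper.
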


\begin{conjecture}\label{c5}
Let $p>3$ be a prime. Then
\begin{equation}
\sum_{k=0}^{p-1}(2pk-2k-1)\frac{(\frac{-1}{p-1})_k^{2p-2}}{k!^{2p-2}}
\equiv 0\pmod{p^5}. \label{eq:p5}
\end{equation}
More generally, if $p>3$ is a prime and $r$ a positive integer, then
\begin{equation}
\sum_{k=0}^{p^r-1}(2kp^r-2k-1)\frac{(\frac{-1}{p^r-1})_k^{2p^r-2}}{k!^{2p^r-2}}
\equiv 0\pmod{p^{2r+3}}.
\end{equation}
\end{conjecture}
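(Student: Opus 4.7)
My plan is to establish a $q$-analogue of Conjecture~\ref{c5} and then specialize $q\to 1$, following the template used in the proofs of Theorems~\ref{thm:2} and~\ref{thm:3} via Theorem~\ref{thm:5}. Writing $d=p^r-1$ and $n=p^r$, so that $n\equiv 1\pmod d$, the summand of \eqref{eq:p5} is the $q\to 1$ limit of
\begin{align*}
T_k(q):=[2dk-1]\,\frac{(q^{-1};q^d)_k^{2d}}{(q^d;q^d)_k^{2d}}\,q^{Ck}
\end{align*}
for any integer $C$. The natural conjectural $q$-lift is thus
\begin{align*}
S_n(q):=\sum_{k=0}^{n-1}T_k(q)\equiv 0\pmod{[n]^2\Phi_n(q)^3},
\end{align*}
with $C$ an explicit integer to be pinned down by small-case experimentation. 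Since $[p^r]|_{q=1}=p^r$ and $\Phi_{p^r}(q)|_{q=1}=p$, the modulus $[n]^2\Phi_n(q)^3$ specializes at $q=1$ to $p^{2r}\cdot p^3=p^{2r+3}$, which matches the target of Conjecture~\ref{c5}.

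To prove the $q$-congruence I would follow the strategy of Section~\ref{sec:thms23}. First, use the square-reduction identity \eqref{eq:mod-square} iteratively to replace half of the $2d$ copies of $(q^{-1};q^d)_k$ in $T_k(q)$ by asymmetric pairs $(q^{-1-\alpha_j n},q^{-1+\alpha_j n};q^d)_k$, obtaining a sum congruent to $S_n(q)$ modulo a high power of $\Phi_n(q)$. Next, apply Andrews' multi-series transformation \eqref{andrews} with $m$ roughly $d-1$ (twice the $m=(d-2)/2$ used for Theorem~\ref{thm:5}) and with parameters $b_i,c_i$ chosen so that the left-hand side of \eqref{andrews} specialises to the resulting sum. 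If the prefactor $(aq,aq/b_mc_m;q)_N/(aq/b_m,aq/c_m;q)_N$ on the right-hand side of \eqref{andrews} can be arranged to contain five factors of the form $1-q^{\beta n}$, then the congruence $S_n(q)\equiv 0\pmod{\Phi_n(q)^5}$ follows, covering the $r=1$ case of Conjecture~\ref{c5}. For the upgrade from $\Phi_n(q)^5$ to $[n]^2\Phi_n(q)^3$ (equivalently, to $[p^{r-1}]^2\Phi_{p^r}(q)^5$ when $n=p^r$), I would run an additional $k\leftrightarrow ((dn-n-1)/d-k)$ anti-symmetry argument along the lines of Section~\ref{sec:thm4}, extracting divisibility by each $\Phi_{p^s}(q)$ with $1\le s\le r-1$ from the cancellation of paired summands modulo those lower cyclotomic factors.

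The main obstacle is the analysis of the multiple sum on the right-hand side of Andrews' transformation. With Pochhammer symbols raised to the power $2d$ rather than $d$, the number of summation indices roughly doubles, so tracking which denominator factors could contain $\Phi_n(q)$ becomes considerably more delicate; a strengthened version of Lemma~\ref{lem:one} accommodating several simultaneous potential sources of $\Phi_n(q)$-factors will be required, and each such candidate factor must be compensated elsewhere in the multiple sum. A secondary hurdle is identifying the correct weight $q^{Ck}$: the exponent $d(d-1)/2$ in \eqref{odd-2} and \eqref{eq:new-2} is dictated by Watson's $_8\phi_7$ symmetry, but in the $2d$-th-power setting the underlying symmetry is weaker, so $C$ must first be guessed by numerical experiment before the Andrews-transformation step can even be set up.
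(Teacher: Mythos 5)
You should first note that the statement you are addressing is an open problem in this paper: Conjecture~\ref{c5} is stated in the concluding section without proof, and the authors only offer further \emph{conjectural} $q$-analogues of \eqref{eq:p5}. Your text is likewise not a proof but a research plan, and its central step conflicts with the evidence the paper itself provides. You propose to find a weight $q^{Ck}$ for which the truncated $q$-sum is $\equiv 0\pmod{[n]^2\Phi_n(q)^3}$ (for $n=p$ prime this means divisibility by $\Phi_p(q)^5$) and then let $q\to1$. But the paper's complete $q$-analogue \eqref{conj3}, with the natural weight $q^{(p-1)^2k}$, asserts that the sum is congruent to the explicit \emph{nonzero} quantity $\tfrac{(2p-3)(p-1)(p-2)^2(p-3)}{6}(1-q)^2[p]^4$ modulo $[p]^5$; the divisibility by $p^5$ emerges only in the limit $q\to1$ because of the factor $(1-q)^2$, not because the $q$-sum is divisible by the fifth power of the cyclotomic polynomial. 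So the clean $q$-congruence you aim for is expected to be false (and the paper's partial $q$-analogue, modulo $[n]^2\Phi_n(q)^2$, would only yield $p^{2r+2}$ in the limit). Any viable $q$-approach must instead establish a congruence with such an explicit remainder term, which is a qualitatively harder task than the ``$\equiv 0$'' congruences of Theorems~\ref{thm:2}--\ref{thm:5}.

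Even setting aside the target, the mechanism you describe does not reach fifth powers. The reduction \eqref{eq:mod-square} replaces a pair of Pochhammer factors at the cost of an error divisible by $\Phi_n(q)^2$ only; for the small values of $k$ whose summands carry no intrinsic factor of $\Phi_n(q)$, iterating the replacement still leaves you with a congruence valid merely modulo $\Phi_n(q)^2$, so the subsequent Andrews-transformation analysis cannot certify anything beyond the square, no matter how many factors $1-q^{\beta n}$ you locate in the prefactor $(aq,aq/b_mc_m;q)_N/(aq/b_m,aq/c_m;q)_N$ (which, being a ratio of just two $q$-shifted factorials of length $N<(d-1)n/d$, cannot plausibly supply five such factors anyway). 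In addition, the exponent $C$ is left undetermined, the strengthened version of Lemma~\ref{lem:one} needed to control $\Phi_n(q)$ in the denominators of the enlarged multiple sum is not supplied, and the anti-symmetry argument meant to extract the factors $\Phi_{p^s}(q)$ for $s<r$ is only gestured at. As it stands, the proposal neither proves the conjecture nor matches the (nonexistent) proof in the paper; the key missing idea is how to handle a supercongruence whose $q$-analogue has a genuine nonvanishing obstruction modulo the fifth power.
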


Conjectures~\ref{c7} and \ref{c5} are quite remarkable
as they concern supercongruences modulo high prime powers.
We now give two partial $q$-analogues of \eqref{eq:p7} as follows.

\begin{conjecture}
Let $n$ be an integer greater than $1$. Then
\begin{equation*}
\sum_{k=0}^{n-1}
[2nk+2k+1]\frac{(q;q^{n+1})_k^{2n+2}}{(q^{n+1};q^{n+1})_k^{2n+2}}
q^{(n+1)(n-1)k}\equiv 0\pmod{[n]^2\Phi_n(q)^2\Phi_{n^2}(q)}.
\end{equation*}
\end{conjecture}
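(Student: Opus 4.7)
Since $[n]^2\Phi_n(q)^2\Phi_{n^2}(q)=\Phi_n(q)^4\prod_{d\mid n,\,1<d<n}\Phi_d(q)^2\cdot\Phi_{n^2}(q)$ splits in $\mathbb{Z}[q]$ into pairwise coprime cyclotomic factors, by the Chinese Remainder Theorem it suffices to establish three separate divisibilities: by $\Phi_{n^2}(q)$, by $\Phi_n(q)^4$, and by $\Phi_d(q)^2$ for each proper divisor $d>1$ of $n$. I would tackle the three pieces with three different tools borrowed from the paper.

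For the $\Phi_{n^2}(q)$ piece, I would mimic the anti-symmetry argument of Section~\ref{sec:thm4}. Conveniently, the $d=n+1$ case of that argument has $(dn-n-1)/d=n-1$, matching the summation range in the conjecture exactly. Writing $R_k:=(q;q^{n+1})_k/(q^{n+1};q^{n+1})_k$, the $d=n+1$ specialization of~\eqref{aqcong} gives
$$R_{n-1-k}\equiv (-1)^{n-1}q^{n(n-1)^2/2+nk}R_k\pmod{\Phi_{n^2}(q)};$$
raising to the $(2n+2)$-th power eliminates the sign. Combining with the very-well-poised symmetry
$$[2(n+1)(n-1-k)+1]\equiv -q^{-1-2(n+1)k}[2(n+1)k+1]\pmod{\Phi_{n^2}(q)},$$
and with $(n+1)(n-1)\equiv -1\pmod{n^2}$, a direct tracking of all $q$-exponents modulo $n^2$ shows that the $(n-1-k)$-th summand is exactly the negative of the $k$-th modulo $\Phi_{n^2}(q)$, so the paired summands cancel.

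For the $\Phi_n(q)^4$ piece I would apply Andrews' multiseries transformation~\eqref{andrews} in the spirit of the proofs of Theorems~\ref{thm:5},~\ref{thm:6}, and~\ref{thm:7}. Using~\eqref{eq:mod-square} to replace the factor $(q;q^{n+1})_k^2$ in the summand by $(q^{1-n^2},q^{1+n^2};q^{n+1})_k$ modulo $\Phi_n(q)^2$ converts the sum into the left-hand side of~\eqref{andrews} with $q\mapsto q^{n+1}$, $a=q$, $m=n$, $b_i=c_i=q$ for $1\le i\le n-1$, $b_n=q$, $c_n=q^{1+n^2}$, and $N=n-1$. The prefactor $(aq,aq/b_mc_m;q)_N/(aq/b_m,aq/c_m;q)_N$ on the right-hand side then becomes $(q^{n+2};q^{n+1})_{n-1}/(q^{n+1};q^{n+1})_{n-1}$, whose numerator contains the factor $1-q^{n^2}$ and hence $\Phi_n(q)$. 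To boost this to $\Phi_n(q)^4$, I would introduce an auxiliary parameter $a$ into the summand (replacing two of the $(q;q^{n+1})_k$ factors by $(aq,q/a;q^{n+1})_k$) and iterate the argument via creative microscoping of Guo--Zudilin type: the specializations $a=q^{\pm n}$ should reduce the resulting parametric sum to another Andrews-summable series contributing further factors of $\Phi_n(q)$, and pairing these with the congruence $(1-aq^n)(a-q^n)\vert_{a=1}=(1-q^n)^2=\prod_{e\mid n}\Phi_e(q)^2$ also delivers the required $\Phi_d(q)^2$ for each $d\mid n$ with $1<d<n$.

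The main obstacle will be verifying that the multisum on the right-hand side of~\eqref{andrews}, after the parametric reduction, has a denominator coprime to $\Phi_n(q)$---exactly the role played by Lemma~\ref{lem:one} at the end of the proof of Theorem~\ref{thm:5}. In the present case with $(d,r)=(n+1,1)$, an analogue of Lemma~\ref{lem:one} needs to rule out hidden $(1-q^{jn})$ factors in the denominator coming from $(q^{2r};q^d)_{l_1+\cdots+l_{m-1}}$; since $r=1$ and $d=n+1$, the arithmetic is tight but should be workable by a careful case analysis. A secondary subtlety is coordinating the creative-microscoping step with the Andrews step to yield the full $\Phi_n(q)^4$ rather than just $\Phi_n(q)^2$: one must carefully track the interaction of two copies of~\eqref{eq:mod-square} (from the two $(q;q^{n+1})_k^2$-replacements) with the prefactor on the right of~\eqref{andrews}. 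This coordination is the genuinely new feature of the conjecture compared to Theorems~\ref{thm:2}--\ref{thm:d=n+1}, and is the likely reason the conjecture has so far resisted proof.
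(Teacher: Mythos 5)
First, note that this statement is one of the open conjectures of Section~\ref{sec:final}: the paper offers no proof of it, so there is nothing to compare your argument against on the paper's side; the only question is whether your proposal itself constitutes a proof. It does not. The part that is genuinely worked out is the $\Phi_{n^2}(q)$ divisibility: your specialization $d=n+1$ of the anti-symmetry argument of Section~\ref{sec:thm4} is sound (the exponent bookkeeping does close up, since the total extra exponent in the $(n-1-k)$-th term is $(n^2-1)^2-1\equiv 0\pmod{n^2}$), and this piece could be written out completely. But the heart of the conjecture is the factor $[n]^2\Phi_n(q)^2$, i.e.\ $\Phi_n(q)^4$ together with $\Phi_e(q)^2$ for every proper divisor $e>1$ of $n$, and for that part you only have a plan, not an argument. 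Your Andrews step (with $m=n$, $b_i=c_i=q$, $c_n=q^{1+n^2}$, $N=n-1$) is correctly set up and, after a Lemma~\ref{lem:one}-type analysis, would yield divisibility by $\Phi_n(q)^2$ only, exactly as in Theorem~\ref{thm:5}; it cannot give more, because the reduction \eqref{eq:mod-square} that you use to bring the sum into Andrews-summable form is itself only a congruence modulo $\Phi_n(q)^2$, so any conclusion drawn after that reduction is capped at $\Phi_n(q)^2$.

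The proposed remedy --- introducing a parameter $a$ into two of the $(q;q^{n+1})_k$ factors and ``iterating the argument via creative microscoping'' --- is precisely the missing idea, and it is not supplied. You do not exhibit the parametric congruence (modulo $(1-aq^n)(a-q^n)$ times a suitable polynomial) that creative microscoping requires, you do not verify that the specializations $a=q^{\pm n}$ produce series that are actually summable or transformable by \eqref{andrews} with the needed extra cyclotomic factors, and you do not prove the analogue of Lemma~\ref{lem:one} that would keep the resulting multisum denominators coprime to $\Phi_n(q)$ in this new parametric setting. You acknowledge these points yourself (``the main obstacle will be verifying\ldots'', ``the likely reason the conjecture has so far resisted proof''), which is honest, but it means the proposal is a research outline whose decisive steps --- obtaining $\Phi_n(q)^4$ rather than $\Phi_n(q)^2$, and obtaining $\Phi_e(q)^2$ for the proper divisors $e$ of $n$ --- remain unproved. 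As it stands, the conjecture is not settled by your argument.
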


\begin{conjecture}
Let $p$ be a prime. Then
\begin{align}\label{conj2}
&\sum_{k=0}^{p-1}
[2pk+2k+1]\frac{(q;q^{p+1})_k^{2p+2}}{(q^{p+1};q^{p+1})_k^{2p+2}}
q^{(p+1)(p-1)k}\notag\\
&\equiv
-\frac{(2p+1)(p+1)^2p(p-1)}{72}q(1-q)^2[p]^4\Phi_{p^2}(q)
\pmod{[p]^5\Phi_{p^2}(q)}.
\end{align}
\end{conjecture}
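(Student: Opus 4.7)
The plan is to split the modulus $[p]^5\Phi_{p^2}(q)=\Phi_p(q)^5\Phi_{p^2}(q)$ into its two coprime factors and prove the congruence separately modulo each. Since the right-hand side of \eqref{conj2} is manifestly divisible by $\Phi_{p^2}(q)$, the first task is to show that $S(q)$, the left-hand side of \eqref{conj2}, satisfies $S(q)\equiv0\pmod{\Phi_{p^2}(q)}$. For this I would follow the antisymmetry approach of Section~\ref{sec:thm4}: using $q^{p^2}\equiv1\pmod{\Phi_{p^2}(q)}$ together with $\gcd(p+1,p^2)=1$, derive an analogue of the identity \eqref{aqcong} expressing $(q;q^{p+1})_{p-1-k}/(q^{p+1};q^{p+1})_{p-1-k}$ in terms of $(q;q^{p+1})_k/(q^{p+1};q^{p+1})_k$ up to an explicit monomial, and verify that the $k$-th and $(p-1-k)$-th summands cancel modulo $\Phi_{p^2}(q)$ (with the middle index, if any, handled separately). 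Should this direct pairing fail because of the doubled exponent $2p+2$, I would fall back on applying Andrews' transformation \eqref{andrews} with base $q^{p+1}$, $m=p$ and $a=q$, reducing the vanishing to a single $q$-product containing the factor $1-q^{p^2}$ via \eqref{eq:mod-square}.

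For the second task one has to establish
\[
S(q)\equiv-\frac{(2p+1)(p+1)^2 p^2 (p-1)}{72}\,q(1-q)^2[p]^4\pmod{[p]^5},
\]
having used $\Phi_{p^2}(q)\equiv p\pmod{\Phi_p(q)}$ to rewrite $[p]^4\Phi_{p^2}(q)\equiv p\,[p]^4\pmod{[p]^5}$. A preliminary calculation at a primitive $p$-th root of unity $\zeta$ gives, using $\zeta^{p+1}=\zeta$ and $\zeta^{p^2-1}=\zeta^{-1}$,
\[
S(\zeta)=\sum_{k=0}^{p-1}\frac{1-\zeta^{2k+1}}{1-\zeta}\,\zeta^{-k}=\frac{1}{1-\zeta}\Bigl(\sum_{k=0}^{p-1}\zeta^{-k}-\zeta\sum_{k=0}^{p-1}\zeta^{k}\Bigr)=0,
\]
confirming $S\equiv0\pmod{\Phi_p(q)}$ as a free starting point. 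To upgrade this to $\Phi_p(q)^5$ with the stated leading coefficient, I would expand each of the four factors $(q;q^{p+1})_k$, $(q^{p+1};q^{p+1})_k$, $[2(p+1)k+1]$ and $q^{(p+1)(p-1)k}$ to fifth order in the uniformiser $t:=q^p-1$ (which is divisible by $\Phi_p(q)$), track the expansion term by term, and finally identify the resulting rational expression as the claimed coefficient using $\sum_{k=0}^{p-1}k^2=\tfrac{p(p-1)(2p-1)}{6}$ and similar elementary closed forms.

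The main obstacle I foresee is the exact determination of the leading coefficient $-(2p+1)(p+1)^2p(p-1)/72$. Even the qualitative vanishing of $S$ modulo $\Phi_p(q)^4$ — which is the prime-$n=p$ specialisation of the preceding Conjecture in the paper — is nontrivial, as several distinct sources of $\Phi_p$-vanishing must be combined: the second-order behaviour of $[2(p+1)k+1]$ near $q=\zeta$, the cross-terms between the $2p+2$ numerator and denominator copies producing harmonic-type sums of the form $\sum_j(1-\zeta^{1+j(p+1)})^{-1}$, and the monomial $q^{(p+1)(p-1)k}$. A potentially cleaner route — and the one I find most promising — is to first establish a $q$-analogue of Wang's modulo-$p^6$ refinement of \eqref{gao} from \cite{Wang} and then deduce \eqref{conj2} by specialisation, bypassing much of the explicit Taylor bookkeeping at the cost of proving a stronger $q$-statement.
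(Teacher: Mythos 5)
The statement you are addressing is not proved in the paper at all: it is stated as a conjecture (indeed the authors single out \eqref{conj2} as apparently the first basic hypergeometric supercongruence whose $q\to1$ limit gives a congruence modulo $p^6$), so there is no paper proof to compare against, and the question is simply whether your proposal settles it. It does not. What you actually verify is only that $S(\zeta)=0$ at a primitive $p$-th root of unity, i.e.\ vanishing modulo $\Phi_p(q)$ to first order; everything that carries the real content of \eqref{conj2} is deferred in the subjunctive. The divisibility by $\Phi_{p^2}(q)$ is not established: the natural pairing (with $d=p+1$, $N=p^2$, so $(N-1)/d=p-1$ and $k\leftrightarrow p-1-k$) cannot be imported verbatim from Section~\ref{sec:thm4}, because there the summand contains the ratio to the power $d$ with the exponent $q^{d(d-3)k/2}$, while here the power is $2d=2p+2$ and the exponent is $q^{d(d-2)k}$, so the analogue of \eqref{aqcong} raised to the $2d$-th power produces a different compensating monomial whose cancellation must be checked, not assumed; and your fallback via Andrews' transformation \eqref{andrews} with $m=p$ is untested (note also that $m$ would then depend on $p$, and the transformation as used in the paper requires $a q^{m+N}/b_1c_1\cdots b_mc_m$ to match the given power of $q$, which is a nontrivial bookkeeping constraint). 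Likewise, the heart of the conjecture --- divisibility by $\Phi_p(q)^5$ together with the exact coefficient $-(2p+1)(p+1)^2p(p-1)/72$ --- is reduced to ``expand to fifth order in $t=q^p-1$ and identify the coefficient,'' which is precisely the step nobody knows how to carry out; as you yourself note, even vanishing modulo $\Phi_p(q)^4$ is open, and the alternative route through a $q$-analogue of Wang's mod-$p^6$ refinement of \eqref{gao} replaces the problem by a strictly harder unproved statement.

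On the positive side, your framework is sound: $[p]^5\Phi_{p^2}(q)=\Phi_p(q)^5\Phi_{p^2}(q)$ does split into the two coprime primitive factors (divisibility by the product follows from divisibility by each factor via Gauss's lemma), and your reduction of the right-hand side modulo $[p]^5$, using $\Phi_{p^2}(q)\equiv p\pmod{\Phi_p(q)}$ to replace $[p]^4\Phi_{p^2}(q)$ by $p\,[p]^4$, is correct. But as it stands the proposal is a research plan with the decisive steps missing, not a proof, and it neither proves the conjecture nor reproduces an argument from the paper (which offers none).
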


It is clear that the $q\to 1$ case of \eqref{conj2} reduces to
\eqref{eq:p7} modulo $p^6$. We would like to emphasize that
\eqref{conj2}, while still conjectural, appears to be the first example of
a basic hypergeometric supercongruence in the existing literature,
that in the limit $q\to 1$ reduces to a supercongruence
(for a hypergeometric series being truncated after a number of terms
that is linear in $p$) modulo $p^6$.

Finally, we give a partial and a complete $q$-analogue of \eqref{eq:p5}
as follows.

\begin{conjecture}
Let $n$ be an integer greater than $1$. Then
\begin{equation*}
\sum_{k=0}^{n-1}
[2nk-2k-1]\frac{(q^{-1};q^{n-1})_k^{2n-2}}{(q^{n-1};q^{n-1})_k^{2n-2}}
q^{(n-1)^2k}\equiv 0\pmod{[n]^2\Phi_n(q)^2}.
\end{equation*}
\end{conjecture}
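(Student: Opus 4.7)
The plan is to adapt the framework used in Theorems~\ref{thm:5} and \ref{thm:7}, which combines Andrews' multiseries transformation~\eqref{andrews} with the quadratic cyclotomic congruence~\eqref{eq:mod-square}. Setting $d=n-1$, the sum to be analysed is
\[
S_n(q):=\sum_{k=0}^{n-1}[2dk-1]\frac{(q^{-1};q^d)_k^{2d}}{(q^d;q^d)_k^{2d}}q^{d^2k}.
\]
Since $[n]=\prod_{e\mid n,\,e>1}\Phi_e(q)$ and the cyclotomic polynomials are pairwise coprime, the target modulus factors as $[n]^2\Phi_n(q)^2=\Phi_n(q)^4\cdot\prod_{e\mid n,\,1<e<n}\Phi_e(q)^2$, and I would handle each cyclotomic factor separately.

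A useful preliminary observation, specific to $d=n-1$ (so that $q^d\equiv q^{-1}\pmod{\Phi_n(q)}$), is the term-wise congruence
\[
\frac{(q^{-1};q^d)_k}{(q^d;q^d)_k}\equiv 1-(q^n-1)\sum_{j=1}^{k}\frac{1}{1-q^j}\pmod{\Phi_n(q)^2},
\]
which suggests expanding the whole summand in powers of $\varepsilon:=q^n-1$. The zeroth-order part of the sum collapses via the elementary telescoping
\[
\sum_{k=0}^{n-1}\frac{q^k-q^{-k-1}}{q-1}=q^{-n}[n]^2,
\]
which is a manifest multiple of $[n]^2$ and, in particular, of $\prod_{e\mid n,\,1<e<n}\Phi_e(q)^2$. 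Together with a careful analysis of the first-order correction in $\varepsilon$, this should supply the divisibility by $\Phi_e(q)^2$ for every proper divisor $e>1$ of $n$.

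To obtain the $\Phi_n(q)^4$ part of the modulus I would invoke \eqref{eq:mod-square} twice at once, replacing four of the $2d$ copies of $(q^{-1};q^d)_k$ by
\[
(q^{-1-\alpha n},q^{-1+\alpha n},q^{-1-\beta n},q^{-1+\beta n};q^d)_k
\]
for two distinct integer parameters $\alpha,\beta$, and then applying \eqref{andrews} with $m=d-1$, assigning $(b_1,c_1)=(q^{-1-\alpha n},q^{-1+\alpha n})$ and $(b_m,c_m)=(q^{-1-\beta n},q^{-1+\beta n})$ and setting the remaining $b_i,c_i=q^{-1}$. The prefactor $(aq,aq/b_mc_m;q^d)_N/(aq/b_m,aq/c_m;q^d)_N$ should contribute a factor $\Phi_n(q)^2$ exactly as in the proof of Theorem~\ref{thm:5}, while the second pair would enter the inner multisum through $(aq/b_1c_1;q^d)_{l_1}$ and furnish a further $\Phi_n(q)^2$ once the truncation forced by $(q^{-N};q^d)_{l_1+\cdots+l_{m-1}}$ is exploited.

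The main obstacle will be controlling the residuals generated by \eqref{eq:mod-square}. A single application of that congruence is only valid modulo $\Phi_n(q)^2$, so two simultaneous substitutions a priori guarantee only $\Phi_n(q)^2$, not $\Phi_n(q)^4$. To close this gap I would look for an involution $k\mapsto(n-1)-k$ on the summation index analogous to the anti-symmetry exploited in Section~\ref{sec:thm4}: because the Pochhammer exponent has been doubled from $d$ to $2d$, the pairing should produce an additional order of cancellation. Verifying that this symmetry genuinely delivers the full $\Phi_n(q)^4$ divisibility appears to be the essentially new ingredient required, and is presumably the reason the statement is still listed as a conjecture in the paper.
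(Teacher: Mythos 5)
There is no proof in the paper to compare against: this statement is one of the concluding \emph{conjectures} (a partial $q$-analogue of \eqref{eq:p5}), and your text is a research plan rather than a proof, with the decisive steps missing. The central gap is the one you flag yourself: the congruence \eqref{eq:mod-square} holds only modulo $\Phi_n(q)^2$, so replacing four of the $2d$ copies of $(q^{-1};q^d)_k$ by $(q^{-1\pm\alpha n},q^{-1\pm\beta n};q^d)_k$ perturbs the sum by an error that is controlled only modulo $\Phi_n(q)^2$; however many factors of $\Phi_n(q)$ the Andrews-transformed side then exhibits, you can conclude nothing about the original sum beyond divisibility by $\Phi_n(q)^2$. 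The proposed rescue via the involution of Section~\ref{sec:thm4} is not substantiated: there the anti-symmetry $k\mapsto (dn-n-1)/d-k$ yields divisibility by a single factor $\Phi_{dn-n}(q)$ of a \emph{different} cyclotomic polynomial, and no argument is given that doubling the exponent from $d$ to $2d$ converts it into two extra powers of $\Phi_n(q)$. Reaching the fourth power would require a genuinely new device (e.g.\ a parametric/creative-microscoping refinement in the spirit of \cite{GuoZu}), which your outline does not set up.

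The $[n]^2$ part is also not established. Your telescoping identity $\sum_{k=0}^{n-1}(q^k-q^{-k-1})/(q-1)=q^{-n}[n]^2$ is correct, but it only describes the summand reduced modulo $\Phi_n(q)$, where $q^{n-1}\equiv q^{-1}$. For a proper divisor $e$ of $n$ with $1<e<n$ the reduction is entirely different: modulo $\Phi_e(q)$ the denominator $(q^{n-1};q^{n-1})_k$ itself vanishes to positive order once $k\geqslant e$ (since $\gcd(e,n-1)=1$), and the numerator $(q^{-1};q^{n-1})_k$ vanishes for other ranges of $k$, so an expansion in $\varepsilon=q^n-1$ is not even well defined term by term modulo $\Phi_e(q)^2$, and the promised first-order analysis is nowhere carried out. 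As it stands, the proposal proves at most what is already contained in the paper's methods (divisibility by low powers of individual cyclotomic factors) and leaves the full modulus $[n]^2\Phi_n(q)^2$ open — consistent with the fact that the statement remains a conjecture.
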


\begin{conjecture}
Let $p$ be a prime. Then
\begin{align}\label{conj3}
&\sum_{k=0}^{p-1}
[2pk-2k-1]\frac{(q^{-1};q^{p-1})_k^{2p-2}}{(q^{p-1};q^{p-1})_k^{2p-2}}
q^{(p-1)^2k}\notag\\
&\equiv
\frac{(2p-3)(p-1)(p-2)^2(p-3)}{6}(1-q)^2[p]^4
\pmod{[p]^5}.
\end{align}
\end{conjecture}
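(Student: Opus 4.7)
The plan is to treat the statement as a refined analogue of Theorem~\ref{thm:7} (with the inner Pochhammer exponent doubled) and attack it by combining Andrews' transformation with a residue computation at a primitive $p$-th root of unity $\zeta$, using the observation that $[p]=\Phi_p(q)$ whenever $p$ is prime, so that the assertion is really a congruence modulo $\Phi_p(q)^5$.

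First I would establish the weaker divisibility
$$
S(q):=\sum_{k=0}^{p-1}[2pk-2k-1]\frac{(q^{-1};q^{p-1})_k^{2p-2}}{(q^{p-1};q^{p-1})_k^{2p-2}}q^{(p-1)^2k}\equiv 0\pmod{\Phi_p(q)^4}.
$$
One route is to iterate the antisymmetry argument of Section~\ref{sec:thm4}: the substitution $k\mapsto p-1-k$ should send the summand to its negative modulo a low power of $\Phi_p(q)$, and a careful expansion of the defect (bringing in a factor of $(1-q)$ together with additional powers of $\Phi_p(q)$) ought to bootstrap the $\Phi_p(q)^2$-divisibility of Theorem~\ref{thm:7} all the way up to $\Phi_p(q)^4$. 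An alternative route is to apply a suitable specialization of Andrews' transformation~\eqref{andrews}, possibly after factoring out one Pochhammer to match parities, parallel to the proof of Theorem~\ref{thm:5} but pushing the counting of $1-q^{\alpha p}$ factors in the resulting prefactor from two up to four.

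Given the $\Phi_p(q)^4$-divisibility, the next step is to identify
$$
\frac{S(q)}{\Phi_p(q)^4}\equiv\frac{(2p-3)(p-1)(p-2)^2(p-3)}{6}(1-q)^2\pmod{\Phi_p(q)}.
$$
I would do this by specializing $q\to\zeta$, writing $q=\zeta(1+\epsilon)$, and expanding each factor of the summand to the appropriate order in $\epsilon$. The ratios $(q^{-1};q^{p-1})_k/(q^{p-1};q^{p-1})_k$ develop explicit leading behaviour at $q=\zeta$ that can be tracked via the standard power-sum identities $\sum_{k=1}^{p-1}(1-\zeta^k)^{-j}$, which are polynomials in $p$ expressible through Bernoulli numbers; summing these contributions in $k$ should recover the claimed closed form.

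The main obstacle lies precisely in this final residue computation. The coefficient $(2p-3)(p-1)(p-2)^2(p-3)/6$ factors into four linear factors in $p$, signalling that the \emph{a priori} unwieldy alternating sum of Bernoulli-number polynomials produced by the naive expansion must collapse dramatically. Finding the structural identity responsible for this collapse -- perhaps a Bailey-type transformation tailored to the parameter pattern $(q^{-1};q^{p-1})_k^{2p-2}$, or a creative-microscoping parametrization in the spirit of Guo--Zudilin that trivializes the computation at the level of a one-parameter identity -- is where I expect the real work to lie, and it likely explains why the result is stated here as a conjecture rather than a theorem.
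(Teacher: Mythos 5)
There is no proof to compare against: the statement you are addressing is one of the paper's open conjectures (it appears in Section~\ref{sec:final} alongside Conjecture~\ref{c5}, of which it is presented as a complete $q$-analogue), and the authors themselves only prove results modulo $\Phi_n(q)^2$ or $\Phi_n(q)^2\Phi_{dn-n}(q)$ in this paper. Your proposal is a strategy outline rather than a proof, and both of its pillars are genuinely missing. For the first step, the claimed divisibility $S(q)\equiv 0\pmod{\Phi_p(q)^4}$ does not follow from either route you suggest: the antisymmetry argument of Section~\ref{sec:thm4} pairs the $k$-th and $((dn-n-1)/d-k)$-th terms and yields divisibility by $\Phi_{dn-n}(q)$ -- here a \emph{different} cyclotomic polynomial, $\Phi_{p(p-2)}(q)$, not an extra power of $\Phi_p(q)$ -- so ``iterating'' it cannot raise the power of $\Phi_p(q)$; and the Andrews-transformation mechanism of Theorem~\ref{thm:5} produces exactly two factors of the form $1-q^{\alpha n}$ in the prefactor $(aq,aq/b_mc_m;q)_N$, so ``pushing the counting from two up to four'' is not a routine extension -- the doubled exponent $2p-2$ changes the parameter pattern so that the hypotheses matching $(q^{r\pm\alpha n};q^d)_k\equiv(q^r;q^d)_k^2\pmod{\Phi_n(q)^2}$ no longer suffice, which is precisely why even the weaker companion statement (modulo $[n]^2\Phi_n(q)^2$) is only conjectured in the paper.

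For the second step, you correctly identify that one must evaluate $S(q)/\Phi_p(q)^4$ at a primitive $p$-th root of unity and that this leads to sums of the type $\sum_{k=1}^{p-1}(1-\zeta^k)^{-j}$, but you do not carry out this computation, and you explicitly concede that the structural identity forcing the collapse to $(2p-3)(p-1)(p-2)^2(p-3)/6\cdot(1-q)^2$ is unknown to you. Congruences modulo the fifth power of a cyclotomic polynomial have so far required substantially heavier machinery (cf.\ the paper's reference to the authors' separate work on a fifth-power supercongruence), and nothing in your outline supplies it. So the proposal, as written, leaves the statement exactly where the paper leaves it: open.
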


It is clear that the $q\to 1$ case of \eqref{conj3} reduces to the
modulo $p^5$ congruence in \eqref{eq:p5}.

\end{document}